\documentclass[a4paper, twoside, 11pt]{amsart}
\DeclareMathAlphabet{\mathpzc}{OT1}{pzc}{m}{it}
\usepackage[margin=3cm]{geometry}                 
\usepackage{graphicx}
\usepackage{amssymb}
\usepackage{mathrsfs}
\usepackage{enumitem}
\usepackage{array,multirow}
\usepackage{mathtools}
\usepackage{dsfont}
\usepackage{calligra}
\DeclareMathAlphabet{\mathcalligra}{T1}{calligra}{m}{n}
\usepackage{tikz}
\usepackage{tikz-cd}

\usepackage{float}
\usepackage{multicol}
\usepackage{rotating}
\usepackage{arydshln}
\usetikzlibrary{positioning}
\usepackage{caption}
\usepackage{subcaption}
\usepackage{mathdots}

\usepackage{ytableau}
\usepackage{hyperref}
\usepackage{amssymb,xspace,amscd,yhmath,mathdots,txfonts,
mathrsfs, youngtab,stmaryrd}
\usepackage{dynkin-diagrams}

\usepackage[matrix,arrow,curve]{xy}\CompileMatrices

\usepackage{yfonts}[1998/10/03]

\makeatletter
\pdfstringdefDisableCommands{\let\HyPsd@CatcodeWarning\@gobble}
\makeatother

\newtheorem{theorem}{Theorem}[section]
\newtheorem{lemma}[theorem]{Lemma}

\newtheorem{proposition}[theorem]{Proposition}

\newtheorem{remark}[theorem]{Remark}
\newtheorem{definition}[theorem]{Definition}

\newcommand\gt{\mathfrak{g}}
\newcommand\qt{\mathfrak{q}}

\newcommand\st{\mathfrak{s}}

\newcommand\sot{\mathfrak{so}}
\newcommand\slt{\mathfrak{sl}}

\newcommand\ft{\mathfrak{f}}
\newcommand\bt{\mathfrak{b}}
\newcommand\hg{\mathfrak{h}}

\newcommand\su{\mathfrak{su}}

\newcommand\C{\mathbb{C}}
\newcommand\R{\mathbb{R}}
\newcommand\Z{\mathbb{Z}}

\newcommand\K{\mathbb{K}}

\newcommand\sfM{\textsf{M}}
\newcommand\sfT{\textsf{T}}
\newcommand\sfF{\textsf{F}}
\newcommand\sfV{\textsf{V}}

\newcommand\sfa{\textsf{a}}
\newcommand\pct{\textsf{p}}
\newcommand\Ta{\textsf{H}}

\newcommand\qq{\mathpzc{q}}
\newcommand\vq{\mathpzc{v}}
\newcommand\wq{\mathpzc{w}}

\newcommand\Gf{\mathbf{G}}
\newcommand\Sf{\mathbf{S}}
\newcommand\Qf{\mathbf{Q}}

\newcommand\SU{\mathbf{SU}}

%%%%%%%%Commenti in colore

\usepackage{todonotes} 
%%%Stefano
 
%%%Costantino

\title{On Homogeneous CR Manifolds of Arbitrary Order of Levi Nondegeneracy }

\author{Stefano Marini}
\address{Stefano Marini, Department of Mathematical, Physical, and Computer Sciences, University of Parma, Parco Area delle Scienze 53/A (Campus), 43124 Parma, Italy }
\email{stefano.marini@unipr.it}

\author{Costantino Medori}
\address{Costantino Medori, Department of Mathematical, Physical, and Computer Sciences, University of Parma, Parco Area delle Scienze 53/A (Campus), 43124 Parma, Italy }
\email{costantino.medori@unipr.it}

\makeatletter
\@namedef{subjclassname@2020}{\textup{2020} Mathematics Subject Classification}

\makeatother
\subjclass[2020]{32V05, 32V40, 32V35, 32V30, 53C30, 53C10.
}
\keywords{CR geometry, Lie algebra, $k$-nondegenerate CR structure,  homogeneous model}

\thanks{The Authors were partially  supported by  PRIN 2022MWPMAB — “Interactions between Geometric Structures and Function Theories”
CUP: H53D23002040006; and and by 
the group G.N.S.A.G.A. of I.N.d.A.M.
}
\date{\today}
\begin{document}
\begin{abstract}

This paper present  homogeneous CR hypersurfaces satisfying the $CR$-invariant property of being $k$-nondegenerate for an arbitrary integer $k\geq 1$. The construction of such homogeneous manifolds is based on $CR$ algebras defined by irreducible representations of   $\su(2)$. An explicit study of the iterated Levi forms with their respective kernels, along with the local model equation, is given.
\end{abstract}
\maketitle

%\tableofcontents

\section*{Introduction}

In \emph{Cauchy-Riemann geometry}, a well-known problem is finding equivalence classes under the relation of CR diffeomorphisms. One approach to solving this problem is to find sufficiently many $CR$ invariants that characterize each class, called the equivalence problem. This problem has been classically solved for CR manifolds that are Levi nondegenerate real hypersurfaces and a model for the homogeneous case is given by
$$ 
 \SU{(1+p, q+1)}/   \mathbf{H} \simeq  \left\{\ 
[z_0,\ldots,z_{n+1}]
\,\vert \, \sum_{i = 0}^{p} |z^i|^2{=}\sum_{i = p+1}^{n+1} |z^i|^2  \ \right\} 
\subset \mathbb{CP}^{n+1}$$ (see  \cite{chern1974,Tanaka67,Tanaka70, Tanaka76}).  In particular, in the context of (locally) homogeneous manifolds, this problem can be studied using the tool of \emph{CR} algebras \cite{MN05}, i.e., a pair $(\mathfrak{g}^{\tau}, \mathfrak{f})$, where $\mathfrak{g}^{\tau}$ is a real Lie algebra encoding the group of CR diffeomorphisms, and $\mathfrak{f} \subset \mathfrak{g} = \mathbb{C} \otimes \mathfrak{g}^{\tau}$ is a complex Lie subalgebra encoding the CR structure. In \cite{MN98}, a classification of all semisimple 
Levi-Tanaka algebras was obtained. These are the 
Levi nondegenerate $CR$ algebras $(\mathfrak{g}^{\tau}, \mathfrak{f})$, 
where $\mathfrak{g}$ is semisimple, and $\mathfrak{g}^{\tau}$ inherits 
a $\mathbb{Z}$-gradation from $\mathfrak{g}$ that is compatible 
with the $CR$ structure. Several examples of 
homogeneous $CR$ manifolds are associated with 
$CR$ algebras $(\mathfrak{g}^{\tau}, \mathfrak{f})$ that  are $k-$nondegenerate for some $k{>} 1$, i.e., Levi degenerate but also satisfying a weaker condition of nondegeneracy according to their Levi form (see \cite[Def.11.1.8 p.317]{BER61999}).
In the case of a point $p$ in a generic analytic $CR$ hypersurface, this notion is equivalent to holomorphic nondegeneracy at $p$, that is the absence of a germ at $p$ of a holomorphic vector field $X$ tangent to $M$ (i.e., with a representative tangent to $M$ in a neighborhood of $p$) such that $X \mid_M \nequiv 0$ (see \cite[Th. 11.5.1, p. 330]{BER61999}). This, in turn, implies the finiteness of the dimension of the $CR$ automorphism group at $p$.
The $k-$nondegeneracy is widely used in the literature as a key invariant in solving the equivalence problem or studying  the group of $CR$ automorphisms, see \cite{IsaevZaitiev2013,MedoriSpiro2014,KruSan2025, ZelenkoSykes2023,MerkerPocchiola2020,KolarKoss2022,Greg2021}. 
In particular \cite{Fels2007,FelsKaup2008} the Authors pointed that it 
would be interesting to construct homogeneous $k-$nondegenerate $CR$ manifolds for arbitrarily large $k$. Quoting verbatim: ``We do not know how to construct $k$-nondegenerate homogeneous manifolds with $k \geq 5$ (should these exist)". In this paper  we address this question.\par  In Section~$1$, we recall some basic definitions in $CR$ geometry. In Section~$2$, we translate these definitions into the context of homogeneous $CR$ manifolds; in particular, we review some tools defined for non-semisimple homogeneous $CR$ manifolds. Finally, in Section~$3$, we define and characterize an example of a homogeneous $CR$ manifold that is $k$-nondegenerate for arbitrary $k \geq 1$, providing a local equation in Section~$4$ (Theorem~\ref{loceq}).
This last result should be compared, for $k = 1$, with \cite[Theorem~1.1]{Doubrov2021}, and for $k = 2$, with \cite[Table~1, Type~VII]{Sykes2023} (see also \cite[(19), p.~37]{Bel2023}). For arbitrary $k$, we also refer to \cite[Example~1]{Labovskii1997}.

\section{CR manifolds}
Let $\sfM$ be a $(2n+d)$-dimensional CR manifold, i.e. a differential manifold equipped with a \emph{CR structure} given by a rank $n$ complex distribution $\Ta^{0,1}\sfM\subset\C \otimes\sfT\sfM$ such that 
\begin{enumerate}
    \item $\Ta^{0,1}\sfM\cap\overline{\Ta^{0,1}\sfM}
    =\{0\}$
    \item $[\Gamma(\Ta^{0,1}\sfM),\Gamma(\Ta^{0,1}\sfM)]\subset\Gamma(\Ta^{0,1}\sfM)$.
\end{enumerate}
We call $(\sfM, \Ta^{0,1}\sfM)$ a \emph{Cauchy--Riemann manifold} of \emph{CR-dimension} $n$ and \emph{CR-codimension} $d$. When $d = 1$, we refer to it as being of \emph{hypersurface type}. A $CR$ manifold can also be given 
by a rank $2n$ real subbundle  {$\Ta_\R\sfM$} of $\sfT\sfM$ which is the{\emph{ real distribution}}
underlying the  CR  structure, i.e.,   $\Ta_\R\sfM=\textrm{Re}\left(\Ta^{0,1}\sfM\oplus\Ta^{1,0}\sfM\right)$ and a smooth complex structure  {$\textrm{J}_{\sfM}$}  on the fibers of $\Ta_{\R}\sfM$, defined by $\Ta^{0,1}\sfM{=}\{X+i\,\textrm{J}_{\sfM} {X}\mid X\in\Gamma(\Ta_\R\sfM)\},$ 
%The map $\textrm{J}_{\sfM}$  
that it squares to minus identity and  satisfies $\textsf{N}_{\textrm{J}_{\sfM}}(X,Y){=}[\textrm{J}_{\sfM}X,\textrm{J}_{\sfM}Y]-[X,Y]-\textrm{J}_{\sfM}([\textrm{J}_{\sfM}X,Y]+[X,\textrm{J}_{\sfM}Y])=0$ for any two sections $X,Y\in\Gamma(\Ta_\R\sfM)$,
taking the name of {\emph{partial complex structure}}
of~$\sfM$.\par
We label by $\mathcal{L}^{1}$ its \emph{Levi form}, which it is a sesquilinear 
form on $\Ta^{0,1}\sfM$ valued in 
$\mathbb{C}\sfT\sfM/(\Ta^{0,1}\sfM\oplus\overline{\Ta^{0,1}\sfM)}$ given at point 
$\pct$ in $\sfM $ by 
\begin{equation}\label{Levi form}
    \mathcal{L}^{1}(X_{\pct},Y_{\pct})=
\frac{1}{2i}\left[X,\overline{Y}\right]_{\pct}
\pmod{\Ta^{0,1}_{\pct}\sfM\oplus
\overline{\Ta^{0,1}_{\pct}\sfM}}
\end{equation}
for all $X,Y\in\Gamma(\Ta^{0,1}\sfM)$.
The kernel at ${\pct}\in \sfM$ of $\mathcal{L}^{1}$ is given by 
\begin{equation*}
    \mathcal{F}_{\pct}^1=\left\{v\in \Ta^{0,1}_{\pct}\sfM\,\left|\,\mathcal{L}^1(v,w)=0\,\, \forall\, w\in \Ta^{0,1}_{\pct}\sfM\right.\right\}.
\end{equation*}
When $\mathcal{F}_{\pct}^1$ is of constant rank at every point then  $\mathcal{F}^1=\bigcup_{\pct}\mathcal{F}_{\pct}^1$ is a vector distribution named \emph{Levi Kernel}.\par
A natural generalization of \eqref{Levi form} then is  the \emph{$k-$ order Levi form} 

\begin{equation}\label{K Levi forms}
    \mathcal{L}^k:\Ta^{0,1}\sfM
    \times \left(\bigodot^{k}\Ta^{0,1}\sfM\right)\to 
\mathbb{C}\sfT\sfM/(\Ta^{0,1}\sfM\oplus\overline{\Ta^{0,1}\sfM
})
    \end{equation}
  inductively defined at a point ${\pct}$ of $\sfM$ as 
\begin{equation*}
\mathcal{L}^{k}\left(X_{\pct},Y^1_{\pct},\ldots,Y^k_{\pct}\right)=\frac{
1}
{2i}\left[\cdots\left[\left[X,\overline{Y^1}\right],\cdots
\right],\overline{Y^k}\right]_{\pct}\pmod{\Ta^{0,1}_{\pct}\sfM\oplus\overline{\Ta^{0,1}_{\pct}\sfM}}
    \end{equation*}
with $X,Y^1,\ldots,Y^k\in\Gamma \left(\Ta^{0,1}\sfM\right)$.

 The left kernels of \eqref{K Levi forms} at $\pct\in\sfM$ is given by
\begin{align}\label{K kernel}
\mathcal{F}_{\pct}^k=
\{ 
v\in \Ta^{0,1}_{\pct}\sfM\,|\,\mathcal{L}^k(v,w_1,\ldots,w_k)=0\,\, \forall\, w_1,\ldots,w_k\in \Ta^{0,1}_{\pct}\sfM 
\}.
\end{align}
Again when the $\mathcal{F}_{\pct}$ are of constant rank at every points the vector distributions $\mathcal{F}^k=\bigcup_{{\pct}\in M}\mathcal{F}_{\pct}^k $ give rise to a nested sequence of complex subbundle 
of $\Ta^{0,1}\sfM$ (see \cite{Freeman1977}) 
\begin{equation}
\Ta^{0,1}\sfM\supseteq\mathcal{F}^1\supseteq\mathcal{F}^2\dots\supseteq\mathcal{F}^{k-1}\supseteq\mathcal{F}^{k}\supseteq\dots%\supseteq\{0\}. 
\end{equation}
\begin{definition}[Def.11.1.8\cite{BER61999}]\label{def knndg}
A CR manifold $\sfM$ is \emph{$k-$nondegenerate}, at ${\pct}\in\sfM$, if there exist a positive integer
$k$ such that
\begin{equation*}\label{k-nndg}
\mathcal{F}_{\pct}^{(k-1)} \supsetneqq \mathcal{F}_{\pct}^{(k)}{=}\{0\}, 
\end{equation*}
otherwise we say that it is \emph{finitely degenerate}.
\end{definition}
\section{Homogeneous CR manifolds and CR algebras}\label{CRalgebra}
From now on we will denote by $\gt^{\tau}$ a real form of a complex Lie algebra $\gt$, i.e., a subalgebra of $\gt$ such that $\gt^{\tau} \otimes \C = \gt$. To $\gt^{\tau}$, we can associate an anti-$\C$ linear involution $\tau$, that makes $\gt^{\tau}$ the subalgebra of the $\tau$-fixed elements of $\gt$.\par 

Let $\Gf^{\tau}$ be a Lie group with $\mathrm{Lie}\,\Gf^{\tau}=\gt^{\tau}$ acting transitively on a CR manifold $\sfM$. Fix a point $\pct \in \sfM$, and let
\[
\piup \colon \Gf^{\tau} \ni \sfa \mapsto \sfa \cdot \pct \in \sfM
\]
be the natural projection. The differential at $\pct$ defines a map
\[
\piup_{*} \colon \gt^{\tau} \to \sfT_{\pct}\sfM,
\]
which maps the Lie algebra $\gt^{\tau}$ of $\Gf^{\tau}$ onto the tangent space to $\sfM$ at $\pct$. By the formal integrability of the partial complex structure of $\sfM$, the complexification of the pullback of the $CR$ structure at $\pct$ is a complex Lie subalgebra
\[
\ft = (\piup^{*})^{\C}(\Ta^{0,1}_{\pct}\sfM)
\]
of the complexification $\gt$ of $\gt^{\tau}$. Conversely, the assignment of a complex Lie subalgebra $\ft$ of $\gt$ yields a formally integrable $\Gf^{\tau}$-equivariant partial complex structure on a $\Gf^{\tau}$-homogeneous space $\sfM$ by requiring that $\Ta_{\pct}^{0,1}\sfM = \piup^{\C}_{*}(\ft)$ (see \cite{MN97}).
These  lead to the following definition:
\begin{definition} A \emph{CR algebra} is a pair $(\gt^{\tau}, \ft)$, consisting of a real Lie algebra $\gt^{\tau}$ and a complex Lie subalgebra $\ft$ of its complexification $\gt,$ such that the quotient $\gt^{\tau}/(\gt^{\tau}\cap\ft)$ is a finite-dimensional real vector space. \end{definition} We call the intersection $\ft \cap \gt^{\tau}$ its \emph{isotropy subalgebra}. The $CR$-dimension and codimension of a $\Gf^{\tau}-$homogeneous $CR$ manifold $\sfM$ are expressed in terms of its associated CR algebra $(\gt^{\tau},\ft)$ by: \begin{equation*} \begin{cases} CR-\dim\sfM = \dim_{\C}\ft - \dim_{\C}(\ft{\cap}\tau(\ft)),\\ CR-\textrm{codim} \sfM = \dim_{\R}\gt - \dim_{\R}(\ft + \tau(\ft)). \end{cases} \end{equation*} \par

Now, given a CR algebra $(\gt^{\tau}, \ft)$, we have  
\begin{proposition}[Lemma 1.1,\cite{MN05}]
    The analytic tangent space  of the $CR$ homogeneous manifold with associated  $CR$ algebra $(\gt^{\tau}, \ft)$ at $\pct$ carries a unique complex structure $\textrm{J}_\sfM$ in $\pct$, characterized by the property:
\begin{equation}\label{PCS}
    \textrm{J}(X+\gt^\tau\cap\ft)=Y+\gt^\tau\cap\ft \textit{ if and only if }X+\sqrt{-1}Y\in\ft
\end{equation}
for $X,Y\in(\ft+\tau(\ft))\cap\gt^{\tau})/\gt^\tau\cap\ft$.\qed
\end{proposition}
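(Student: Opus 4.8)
The plan is to pass to the complexification, where the desired complex structure is visibly the operator acting as $-\sqrt{-1}$ on the $(0,1)$-part and as $+\sqrt{-1}$ on the $(1,0)$-part, and then to descend this operator to the real analytic tangent space. Throughout I would write $\mathfrak{i}=\gt^\tau\cap\ft$ for the isotropy and $H=\big((\ft+\tau(\ft))\cap\gt^\tau\big)/\mathfrak{i}$ for the analytic tangent space at $\pct$, so that $H$ is precisely the real vector space on which $\textrm{J}_\sfM$ must be defined.

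First I would record the two elementary identities that make the complexification transparent. Since an element of $\gt^\tau$ lying in $\ft$ is $\tau$-fixed and hence also lies in $\tau(\ft)$, the isotropy satisfies $\mathfrak{i}=\gt^\tau\cap\ft\cap\tau(\ft)$, so $\mathfrak{i}$ is the real form of the $\tau$-stable subspace $\ft\cap\tau(\ft)$; likewise $(\ft+\tau(\ft))\cap\gt^\tau$ is the real form of the $\tau$-stable subspace $\ft+\tau(\ft)$. Since complexification commutes with taking these quotients of $\tau$-stable subspaces, one obtains
\[
\C\otimes H=(\ft+\tau(\ft))/(\ft\cap\tau(\ft)),
\]
and a standard argument (the modular law) shows this is the internal direct sum of $\ft/(\ft\cap\tau(\ft))$ and $\tau(\ft)/(\ft\cap\tau(\ft))$. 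Under $\piup^{\C}_{*}$ these two summands are exactly $\Ta^{0,1}_\pct\sfM$ and its conjugate $\Ta^{1,0}_\pct\sfM$.

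Next I would define $\textrm{J}^{\C}$ on $\C\otimes H$ to be multiplication by $-\sqrt{-1}$ on $\ft/(\ft\cap\tau(\ft))$ and by $+\sqrt{-1}$ on $\tau(\ft)/(\ft\cap\tau(\ft))$, so that $(\textrm{J}^{\C})^2=-\mathrm{Id}$ by construction. The step that requires care -- and the main obstacle of the argument -- is to show that $\textrm{J}^{\C}$ descends to a \emph{real} endomorphism of $H$. Because $\tau$ is anti-$\C$-linear, fixes $\ft\cap\tau(\ft)$, and interchanges the two summands $\ft/(\ft\cap\tau(\ft))$ and $\tau(\ft)/(\ft\cap\tau(\ft))$, a direct check on each summand gives $\tau\circ\textrm{J}^{\C}=\textrm{J}^{\C}\circ\tau$; hence $\textrm{J}^{\C}$ preserves the $\tau$-fixed real form $H\subseteq\C\otimes H$ and restricts there to an operator $\textrm{J}_\sfM$ with $\textrm{J}_\sfM^2=-\mathrm{Id}$, i.e.\ a genuine complex structure on the analytic tangent space.

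Finally I would read off the stated characterization and uniqueness. For a representative $X\in(\ft+\tau(\ft))\cap\gt^\tau$ the vector $(X+\mathfrak{i})+\sqrt{-1}\,\textrm{J}_\sfM(X+\mathfrak{i})$ lies in the $(-\sqrt{-1})$-eigenspace $\Ta^{0,1}_\pct\sfM=\ft/(\ft\cap\tau(\ft))$, by the convention $\Ta^{0,1}\sfM=\{X+\sqrt{-1}\,\textrm{J}_\sfM X\}$; writing $\textrm{J}_\sfM(X+\mathfrak{i})=Y+\mathfrak{i}$, this membership unwinds to $X+\sqrt{-1}\,Y\in\ft$, and conversely $(X+\mathfrak{i})+\sqrt{-1}(Y+\mathfrak{i})\in\Ta^{0,1}_\pct\sfM$ forces $Y+\mathfrak{i}=\textrm{J}_\sfM(X+\mathfrak{i})$ upon comparing eigenvalues. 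The condition $X+\sqrt{-1}\,Y\in\ft$ is independent of the chosen representatives, since replacing $X,Y$ by $X+a,Y+b$ with $a,b\in\mathfrak{i}$ changes $X+\sqrt{-1}\,Y$ by $a+\sqrt{-1}\,b\in\ft\cap\tau(\ft)\subseteq\ft$. Uniqueness is then immediate: any complex structure with the stated property has $(-\sqrt{-1})$-eigenspace equal to $\piup^{\C}_{*}(\ft)=\Ta^{0,1}_\pct\sfM$, and a complex structure is determined by this eigenspace.
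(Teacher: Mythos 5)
Your proof is correct: the identification $\C\otimes H=(\ft+\tau(\ft))/(\ft\cap\tau(\ft))$, the direct-sum splitting into $\ft/(\ft\cap\tau(\ft))$ and $\tau(\ft)/(\ft\cap\tau(\ft))$, the $\tau$-equivariance of $\textrm{J}^{\C}$ (which is where anti-linearity of $\tau$ exactly compensates the sign flip between the $\mp\sqrt{-1}$-eigenspaces), the representative-independence of the condition $X+\sqrt{-1}\,Y\in\ft$, and the uniqueness via the $(-\sqrt{-1})$-eigenspace all check out. Note that the paper itself states this proposition without proof, quoting it as Lemma~1.1 of \cite{MN05}; your complexification-and-eigenspace argument is precisely the standard one from that cited source, so your route coincides with the intended proof rather than diverging from anything in the paper.
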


When $\sfM$ is a (locally) homogeneous manifold,  $1$-nondegeneracy, i.e., the standard notion of Levi nondegeneracy in the literature,  at the point $\pct$ can be computed in terms of its associated CR algebra  \begin{equation*} \forall{Z} \in \ft{\backslash}\tau(\ft)\,\exists\, Z' \in \tau(\ft) \text{ such that }[Z, Z'] \notin \ft + \tau(\ft). \end{equation*} This is equivalent to asking that the pullback of the Levi kernel at $\pct$ coincides with the isotropy algebra \begin{equation*}
\ft^{{1}} \coloneqq \{Z \in \ft \mid [Z, \tau(\ft)] \subset \ft + \tau(\ft)\} = \ft \cap \tau(\ft) = (\ft \cap \gt^\tau) \otimes \C. \end{equation*} 
Similary one can look at $k$-nondegenracy at $\pct$ as \begin{equation*} \forall{Z} \in \ft{\backslash}\tau(\ft)\,\exists\, Z'_1,\dots,Z'_r  \in \tau(\ft), r\leq k, \text{ such that }[Z,[ Z'_1,[\dots,Z'_r]\dots] \notin \ft + \tau(\ft). \end{equation*}
 To measure the degeneracy of the Levi form, it is convenient to consider the sequence (see e.g. \cite[(4.3)]{
NMSM}): \begin{equation}\label{Fremanseq} \ft = \ft^{0} \supseteq \ft^{1} \supseteq \dots \supseteq \ft^{k-1} \supseteq \ft^{k} \supseteq \ft^{{k+1}} \supseteq \cdots \end{equation} defined recursively by
$$\ft^{k+1}=\{Z\in\ft^{k}\mid [Z,\tau(\ft)]\subseteq\ft^{k}+\tau(\ft)\}\;\;\;\text{for $k>0.$}$$

We note that $\ft \cap \tau(\ft) {\subseteq}\ft^{k}$ for all integers $k{\geq}0$. Since by assumption $\ft / (\ft \cap \tau(\ft))$ is finite-dimensional, there is a smallest nonnegative integer $k$ such that $\ft^{k}{=}\ft^{k'}$ for all $k'{\geq}k.$ 
\begin{definition} \label{CRalg}
A CR algebra $(\gt^{\tau},\ft)$ is said to be \emph{$k$-nondegenerate} if there is a smallest non-negative integer $k$ such that $\ft^{{k-1}} \supsetneqq \ft^{{k}} {=} \ft {\cap} \tau(\ft).$ 
\end{definition} 
\begin{proposition}[Sec.1.3, \cite{MMN21}]
The elements $\ft^{k}$ of the sequence \eqref{Fremanseq} are Lie subalgebras of $\gt$. 
\end{proposition}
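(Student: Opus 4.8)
The plan is to proceed by induction on $k$, proving that each $\ft^{k}$ is a complex Lie subalgebra of $\gt$. The base case is immediate, since $\ft^{0}=\ft$ is a complex Lie subalgebra by the definition of a $CR$ algebra. For the inductive step I would assume that $\ft^{k}$ is a subalgebra and examine
\[
\ft^{k+1}=\{Z\in\ft^{k}\mid [Z,\tau(\ft)]\subseteq\ft^{k}+\tau(\ft)\}
\]
(for $k=0$ this is the subspace $\ft^{1}$ introduced above). Because the defining condition is $\C$-linear in $Z$, it is clear that $\ft^{k+1}$ is a complex vector subspace of $\ft^{k}$, so the only real content of the statement is closure under the bracket.

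The device I would rely on is a stabilization property: every $Z\in\ft^{k+1}$ satisfies $[Z,\ft^{k}+\tau(\ft)]\subseteq\ft^{k}+\tau(\ft)$, which combines the inclusion $[Z,\ft^{k}]\subseteq\ft^{k}$ (valid since $Z\in\ft^{k}$ and $\ft^{k}$ is a subalgebra by the inductive hypothesis) with the defining inclusion $[Z,\tau(\ft)]\subseteq\ft^{k}+\tau(\ft)$. Granting this, I would take $Z,W\in\ft^{k+1}$; then $[Z,W]\in\ft^{k}$ follows at once from the inductive hypothesis, and to verify the bracket condition I would fix $V\in\tau(\ft)$ and apply the Jacobi identity in the form $[[Z,W],V]=[Z,[W,V]]-[W,[Z,V]]$. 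Since $Z,W\in\ft^{k+1}$, both $[W,V]$ and $[Z,V]$ lie in $\ft^{k}+\tau(\ft)$, and the stabilization property applied to $Z$ and to $W$ then places $[Z,[W,V]]$ and $[W,[Z,V]]$ in $\ft^{k}+\tau(\ft)$. Hence $[[Z,W],\tau(\ft)]\subseteq\ft^{k}+\tau(\ft)$, which together with $[Z,W]\in\ft^{k}$ gives $[Z,W]\in\ft^{k+1}$ and closes the induction.

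The step I expect to require the most care is the stabilization property itself, because $\ft^{k}+\tau(\ft)$ is only a vector subspace and not a subalgebra: one cannot argue with it directly, and it is exactly here that the inductive hypothesis that $\ft^{k}$ (rather than merely $\ft$) is bracket-closed is indispensable for making the Jacobi computation close up.
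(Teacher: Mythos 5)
Your proof is correct and follows essentially the same route as the paper: induction on $k$ with the Jacobi identity applied to $[[Z,W],V]$ for $V\in\tau(\ft)$, using that $Z,W\in\ft^{k}$ and the inductive hypothesis that $\ft^{k}$ is bracket-closed. Your explicit ``stabilization property'' $[Z,\ft^{k}+\tau(\ft)]\subseteq\ft^{k}+\tau(\ft)$ is merely a cleaner packaging of the paper's displayed chain of inclusions (which in fact drops a $\tau(\ft)$ term that your formulation handles correctly), so the two arguments are the same in substance.
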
 
\begin{proof} By definition, $\ft^{0}{=}\ft$ is a Lie algebra. If $Z_{1}, Z_{2} ,{\in},\ft^{1},$ then \begin{align*} [[Z_{1}, Z_{2}], \tau(\ft)] \subseteq [Z_{1}, [Z_{2}, \tau(\ft)]] + [Z_{2}, [Z_{1}, \tau(\ft)]] \subseteq [Z_{1}+Z_{2}, \ft+\tau(\ft)] \subseteq \ft + \tau(\ft) \end{align*} because $[Z_{i}, \ft] {\subset}\ft$, since $Z_{i}$ belongs to the subalgebra $\ft$, and $[Z_{i}, \tau(\ft)] {\subset} \ft{+}\tau(\ft)$ by the definition of $\ft^{1}$. \par Now, we argue by induction. Let $k \geq 1$ and assume that $\ft^{k}$ is a Lie subalgebra. Then, if $Z_{1}, Z_{2} \in \ft^{k+1},$ we obtain: \begin{align*} [[Z_{1}, Z_{2}], \tau(\ft)] \subseteq [Z_{1}, [Z_{2}, \tau(\ft)]] + [Z_{2}, [Z_{1}, \tau(\ft)]] \subseteq [Z_{1}+Z_{2}, \ft^{k}] \subseteq \ft^{k}, \end{align*} because $\ft^{k+1} {\subseteq} \ft^{k},$ showing that also $[Z_{1}, Z_{2}] \in \ft^{(k+1)}.$ This completes the proof. \end{proof}
\begin{remark}\label{weaknndg}
The notion of $k$-nondegeneracy $CR$ algebra $(\gt^{\tau},\ft)$
can be proven to be equivalent to that  of \emph{weak nondegeneracy}  in \cite{MN05} ,  i.e.,  for any complex Lie subalgebra $\ft '$ of~$\gt,$
\begin{equation*} 
 \ft\subseteq\ft '\subsetneq\ft +\tau(\ft) \;\Longrightarrow \ft '=\ft.
\end{equation*}
The equivalence is given setting the largest complex Lie subalgebra $\ft '$ of $\gt$ 
with
 $\ft\,{\subseteq}\,\ft '\,{\subsetneq}\,\ft+\bar{\ft}$ 
 by \begin{equation*}
 \ft'=\ft+\tau\left({{\bigcap}_{k{\geq}0}\ft^{k}}\right)
\end{equation*}
( see  \cite[sec.1.3-1.4]{MMN21} for more details).
\end{remark}

In a series of papers (see e.g. \cite{
AMN06, 
AMN08, 
AMN06b,
Fels2007,
MaNa1, MaNa2, 
Wolf69}), several Authors have considered homogeneous $CR$ manifolds $\sfM$ for the action of a real form  a complex semisimple Lie groups $\Sf$. A large class of homogeneous $CR$ manifolds  are provided by real orbits  in a generalized complex flag manifold $\Sf /\Qf$ for a real form $\Sf^{\tau}$ of $\Sf$. Their $CR$ algebras are made of pairs $(\st^{\tau},\qt)$ at certain point, consisting of a real semisimple Lie algebra $\st^{\tau}$ and a parabolic Lie subalgebra $\qt$ of its complexification $\st$.
For a maximal parabolic $\Qf\subset\Sf$, i.e., not contained in others parabolic subgroup, G.Fels showed the follwing results:
%\SM{definisci chi è sigma}
\begin{theorem}[Th.4\,\cite{{Fels2007}}]
Take a complex semisimple Lie group $\Sf$ and a  parabolic subgroup $\Qf$. \par
Consider the complex flag manifold $\sfF=\Sf/\Qf$.
Fix a real form $\Sf^{\tau}$ of $\Sf$:
\begin{enumerate}
\item
Assume that $\Qf$ is \emph{maximal} parabolic. Then every non-open $\Sf^{\tau}$-orbit $\sfM \subset \sfF=\Sf/\Qf$ is $k$-nondegenerate.
In particular, if $\sfF=\Sf/\Qf$
is an irreducible
Hermitian symmetric space, then every non-open $\Sf^{\tau}$-orbit $\sfM\subset \sfF$ is $k$–nondegenerate with 
$k\leq 2$.
\item
Assume that $\Qf$ is not maximal parabolic.
Then there always exist non-open holomorphically degenerate $\Sf^{\tau}$-orbits in $\sfF$.
\end{enumerate}
\end{theorem}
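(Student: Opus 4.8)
The plan is to reformulate everything through the characterization of $k$-nondegeneracy by \emph{weak nondegeneracy} recalled in Remark~\ref{weaknndg}. Each non-open $\Sf^{\tau}$-orbit $\sfM \subset \sfF$ has, at a chosen base point, a CR algebra of the form $(\st^{\tau}, \qt)$, where $\qt$ is the parabolic subalgebra of $\st$ stabilizing that point, and non-openness means precisely $\qt + \tau(\qt) \subsetneq \st$. By Remark~\ref{weaknndg}, the orbit is $k$-nondegenerate for some finite $k$ if and only if there is no complex Lie subalgebra $\qt'$ with $\qt \subseteq \qt' \subsetneq \qt + \tau(\qt)$ and $\qt' \neq \qt$; finiteness of $k$ is then automatic, since $\st$ is finite-dimensional and the sequence \eqref{Fremanseq} is decreasing and bounded below by $\qt \cap \tau(\qt)$.

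For part~(1) I would use the lattice structure of parabolic subalgebras. Since $\qt$ is parabolic it contains a Borel subalgebra $\bt$, so any complex subalgebra $\qt'$ with $\qt \subseteq \qt'$ contains $\bt$ and is itself parabolic. As $\qt$ is a \emph{maximal} parabolic, the only parabolics containing it are $\qt$ and $\st$. Thus, given $\qt'$ with $\qt \subseteq \qt' \subsetneq \qt + \tau(\qt)$, non-openness forces $\qt' \subsetneq \st$, whence $\qt' = \qt$; weak nondegeneracy follows and the orbit is $k$-nondegenerate. For the Hermitian symmetric refinement I would pass to the depth-one grading $\st = \gt_{-1} \oplus \gt_0 \oplus \gt_1$ determined by $\qt = \gt_0 \oplus \gt_1$, whose nilradical $\gt_1$ is abelian, so $[\gt_1,\gt_1] = [\gt_{-1},\gt_{-1}] = 0$. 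Tracking the grading degrees of the iterated Levi brackets of \eqref{K Levi forms}, a first bracket lands in $\gt_0$, a second in $\gt_{-1}$, and a third necessarily in $[\gt_{-1},\gt_{-1}] = 0$; hence the third-order Levi form vanishes identically and the sequence $\ft^0 \supseteq \ft^1 \supseteq \cdots$ stabilizes already at $\ft^2 = \ft \cap \tau(\ft)$, giving $k \leq 2$. The delicate point is that $\tau$ need not preserve the grading, so this degree-counting must be carried out after selecting a grading element adapted to the real structure; that bookkeeping is the main obstacle in part~(1).

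For part~(2), non-maximality of $\qt$ provides an intermediate parabolic $\qt \subsetneq \qt'' \subsetneq \st$, and hence a holomorphic fibration $\sfF = \Sf/\Qf \to \Sf/\Qf''$ with compact complex fibers $\Qf''/\Qf$. I would then choose the real form $\Sf^{\tau}$ and the base point so that the resulting orbit $\sfM$ is non-open while its intersection with the fibers consists of entire complex submanifolds, that is, so that $\qt \subsetneq \qt'' \subseteq \qt + \tau(\qt) \subsetneq \st$, taking $\qt''$ to be a parabolic minimal over $\qt$ (one added simple root) to keep the inclusion into $\qt + \tau(\qt)$ strict. Such a $\qt''$ is a complex subalgebra strictly between $\qt$ and $\qt + \tau(\qt)$, so by Remark~\ref{weaknndg} weak nondegeneracy fails: the fiber directions integrate to holomorphic vector fields tangent to $\sfM$, and the orbit is holomorphically degenerate. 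The main obstacle here is realizing the two competing requirements simultaneously — non-openness ($\qt + \tau(\qt) \subsetneq \st$) together with containment of a full intermediate parabolic $\qt''$ inside $\qt + \tau(\qt)$ — which I would settle by choosing $\tau$ adapted to the set of simple roots defining $\qt$ and checking non-openness on the associated closed orbit.
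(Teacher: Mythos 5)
This theorem is not proved in the paper at all: it is quoted verbatim as Theorem 4 of \cite{Fels2007}, so there is no internal proof to compare against, and your attempt has to be judged on its own merits. The first assertion of part (1) you do prove correctly and completely, granted the equivalence of $k$-nondegeneracy with weak nondegeneracy from Remark \ref{weaknndg}: any complex subalgebra $\qt'$ containing the parabolic $\qt$ contains a Borel $\bt$, hence is parabolic, and maximality of $\qt$ leaves only $\qt' \in \{\qt, \st\}$; since $\qt' \subsetneq \qt + \tau(\qt) \subseteq \st$ already rules out $\qt' = \st$ (you do not even need non-openness for this step), weak nondegeneracy follows. This is a clean argument and is essentially the mechanism behind Fels's result.

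The remaining two claims, however, contain genuine gaps rather than mere bookkeeping. For the Hermitian refinement $k \leq 2$, your degree count $[\gt_1,\gt_{-1}] \subset \gt_0$, $[\gt_0,\gt_{-1}] \subset \gt_{-1}$, $[\gt_{-1},\gt_{-1}] = 0$ silently assumes $\tau(\qt)$ sits compatibly with the grading defined by $\qt$, i.e.\ that $\tau(\qt) \supseteq \gt_{-1}$-type positioning holds; in general $\tau(\qt)$ is a parabolic in arbitrary relative position, and the Freeman sequence \eqref{Fremanseq} is computed against $\tau(\qt)$, not against $\gt_{-1} \oplus \gt_0$. You acknowledge this (``the delicate point''), but resolving it is the actual content: one must pass to a point of the orbit where $\qt \cap \tau(\qt)$ contains a $\tau$-stable Cartan subalgebra and carry out the root-space combinatorics of the induced involution on roots, which is where Fels's proof does its work. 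For part (2) there is a sharper problem: the theorem fixes the real form $\Sf^{\tau}$, so you may not ``choose the real form''; you may only choose the base point, i.e.\ the relative position of $\qt$ and $\tau(\qt)$. Your plan then reduces to showing that for the given $\tau$ there exists a position with $\qt \subsetneq \qt'' \subsetneq \qt + \tau(\qt) \subsetneq \st$ for some intermediate parabolic $\qt''$ --- and this simultaneous realization (an intermediate parabolic swallowed by $\qt + \tau(\qt)$, with the orbit still non-open) is precisely the existence statement to be proved, which your proposal defers to an unexecuted choice of ``$\tau$ adapted to the simple roots''. Note also that with only $\qt'' \subseteq \qt + \tau(\qt)$ you must separately guarantee strictness of the second inclusion, since $\qt'' = \qt + \tau(\qt)$ would not violate weak nondegeneracy. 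So: part (1), first claim, sound; the $k \leq 2$ bound and all of part (2) remain open in your write-up.
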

In his paper, G.Fels raised  meaningful questions 
like what is the sharp bound for $k-$nondegenerate of $\Sf^{\tau}$-orbits. Additionally, he 
inquired how, in general, one might find examples 
of $k$-nondegenerate homogeneous CR manifolds for 
arbitrary values of $k$ greater than 3. We gave answers in the following forms:

\begin{theorem}[Th. 2.11\,\cite{MMN21}]
Let $\sfM$ be a $\Sf^{\tau}$-orbit in the complex flag manifold  $\sfF=\Sf/\Qf$. 
If $\sfM$ is $k$-nondegenerate, then   $k$ is  less or equal than $3$.
\end{theorem}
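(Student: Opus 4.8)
The plan is to work entirely at the level of the associated \emph{CR algebra}. By the correspondence recalled in Section~\ref{CRalgebra}, a point of an $\Sf^{\tau}$-orbit $\sfM\subset\sfF=\Sf/\Qf$ has a CR algebra of the form $(\st^{\tau},\qt)$, where $\st$ is the semisimple Lie algebra of $\Sf$, $\tau$ is the conjugation fixing the real form $\st^{\tau}$, and $\qt$ is a parabolic subalgebra. Thus the statement becomes the purely algebraic assertion that the Freeman sequence \eqref{Fremanseq} satisfies $\qt^{3}=\qt\cap\tau(\qt)$, so that the least $k$ with $\qt^{k}=\qt\cap\tau(\qt)$ is at most $3$.

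First I would fix a good structural model. Using the theory of real forms and parabolic subalgebras, I would choose a representative of the orbit for which there is a $\tau$-stable Cartan subalgebra $\hg\subseteq\qt\cap\tau(\qt)$, and a grading element $E\in\hg$ such that $\qt=\hg\oplus\bigoplus_{\alpha(E)\geq 0}\st_{\alpha}$ is the sum of the nonnegative $\mathrm{ad}(E)$-eigenspaces. Because $\hg$ is abelian and $\tau$-stable, $E$ and $\tau(E)$ commute, so $\mathrm{ad}(E)$ and $\mathrm{ad}(\tau(E))$ are simultaneously diagonalizable and yield a bigrading $\st=\bigoplus_{(p,q)\in\Z^{2}}\st_{p,q}$ with $[\st_{p,q},\st_{p',q'}]\subseteq\st_{p+p',q+q'}$. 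One checks that $\tau(\st_{p,q})=\st_{q,p}$ and that each root space satisfies $\st_{\alpha}\subseteq\st_{\alpha(E),\,\alpha(\tau E)}$ with $\alpha(\tau E)=\bar\alpha(E)$, where $\bar\alpha$ is the $\tau$-conjugate root determined by $\tau(\st_{\alpha})=\st_{\bar\alpha}$. In these coordinates $\qt=\{p\geq 0\}$, $\tau(\qt)=\{q\geq 0\}$, the isotropy $\qt\cap\tau(\qt)=\{p\geq0,\ q\geq0\}$, and the complement of $\qt+\tau(\qt)$ is exactly the open ``third quadrant'' $\{p<0,\ q<0\}$.

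Next I would translate the Freeman sequence into this picture. Unwinding the recursion $\qt^{k+1}=\{Z\in\qt^{k}\mid[Z,\tau(\qt)]\subseteq\qt^{k}+\tau(\qt)\}$ shows that a root vector $X_{\alpha}\in\qt\setminus\tau(\qt)$ (that is, with $\alpha(E)\geq0$ and $\bar\alpha(E)<0$) lies outside $\qt^{k}$ precisely when there are roots $\beta_{1},\dots,\beta_{r}$, $r\leq k$, each with $\bar\beta_{i}(E)\geq 0$ (so $\st_{\beta_{i}}\subseteq\tau(\qt)$), for which the iterated bracket $[\,\cdots[[X_{\alpha},X_{\beta_{1}}],X_{\beta_{2}}],\dots,X_{\beta_{r}}]$ is nonzero and lands in the third quadrant. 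Hence $(\st^{\tau},\qt)$ is $k$-nondegenerate if and only if every root in the ``fourth quadrant'' region $\{\,\alpha(E)\geq0,\ \bar\alpha(E)<0\,\}$ can be carried into the third quadrant by successively adding at most $k$ roots $\beta_{i}$ from the upper half-lattice $\{\bar\beta(E)\geq0\}$ along a chain of roots, with $k$ minimal. The theorem is therefore equivalent to the combinatorial claim that this \emph{escape length} never exceeds $3$.

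The hard part is exactly this combinatorial bound, and it is where the semisimplicity of $\st$ must be used in an essential way; the quadrant bookkeeping alone only gives that the escape length is finite. To prove length $\leq 3$ I would exploit the $\tau$-symmetry $(p,q)\mapsto(q,p)$, which leaves the escape region $\{p<0,q<0\}$ invariant and interchanges the ``holomorphic'' region $\{p\geq0,q<0\}$ with the ``antiholomorphic'' one $\{p<0,q\geq0\}$. Starting from $\alpha$ with $\alpha(E)\geq0$, $\bar\alpha(E)<0$, the delicate point is that both eigenvalues must be driven negative \emph{simultaneously} while each added root keeps its $q$-degree nonnegative. I would argue by a case analysis on the sign pattern of the intermediate bidegrees, showing that whenever the Levi form (one bracket) fails to escape and a second bracket also fails, the root-string and $\mathfrak{sl}_2$-triple structure of the root system forces a connecting root realizing a successful third bracket; that is, the very obstruction to escaping in two steps is the mechanism producing an escape in three. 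Ruling out the need for a fourth bracket is the crux, and it relies on the detailed interaction of the two commuting gradings together with the boundedness of the structure constants of $\st$. Once the escape length is bounded by $3$, the reformulation yields $\qt^{3}=\qt\cap\tau(\qt)$, hence $k\leq 3$, completing the proof.
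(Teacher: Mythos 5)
Your reduction is sound as far as it goes, and it is essentially the right frame for this theorem: one may indeed pass to the CR algebra $(\st^{\tau},\qt)$ with $\qt$ parabolic, find a $\tau$-stable Cartan subalgebra $\hg\subseteq\qt\cap\tau(\qt)$ and a grading element $E\in\hg$, and the two commuting diagonalizable operators $\mathrm{ad}(E)$, $\mathrm{ad}(\tau E)$ do produce the bigrading with $\tau(\st_{p,q})=\st_{q,p}$, $\qt=\{p\geq 0\}$, $\tau(\qt)=\{q\geq 0\}$, and $\alpha(\tau E)=\bar\alpha(E)$; since each $\ft^{k}$ of the sequence \eqref{Fremanseq} is $\mathrm{ad}(\hg)$-stable, it is a sum of root spaces, and your ``escape length'' reformulation of $k$-nondegeneracy is correct (it is the characterization already quoted in Section~\ref{CRalgebra}). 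Note also that the present paper does not prove this statement at all: it quotes it from \cite{MMN21}, where the proof is carried out in exactly this root-theoretic setting.

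The genuine gap is that the theorem itself is never proved: the entire content of the statement is the combinatorial bound ``escape length $\leq 3$'', and your treatment of it is a declaration of intent rather than an argument. There is no case list, no lemma about root strings, no construction of the ``connecting root'' you invoke, and no mechanism ruling out a fourth bracket; the appeal to ``boundedness of the structure constants'' is a red herring, since the question of which bigraded pieces $\st_{p+p',q+q'}$ are reached does not involve the sizes of structure constants. Worse, the sketch never isolates where parabolicity is used, and it must be used: Section~3 of this very paper constructs, for every $k\geq 1$, a $k$-nondegenerate CR algebra $(\su(2)\oplus\sfV^{\tau},\ft)$ admitting precisely the same kind of $\tau$-compatible bigrading bookkeeping (see \eqref{A1_symm_space_grad}, \eqref{GradV} and the diagram \eqref{tanbella}), so ``two commuting gradings plus the $(p,q)\mapsto(q,p)$ symmetry'' cannot possibly force $k\leq 3$. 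A correct proof has to exploit that $\qt$ is parabolic in a semisimple $\st$ --- for instance, that for every root $\alpha$ either $\alpha$ or $-\alpha$ lies in the parabolic set of roots of $\qt$, together with the resulting interplay between that set, its $\tau$-conjugate, and root addition --- which is exactly the explicit, root-by-root determination of $\ft^{1},\ft^{2},\ft^{3}$ performed in \cite{MMN21} and absent from your proposal. As written, you have established the (standard) reformulation of the problem, not the bound $k\leq 3$.
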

\begin{theorem}[Th.2.17\,\cite{MMN21}]
Let $\sfM$ the unique closed $\Sf^{\tau}$-orbit in the complex flag manifold  $\sfF=\Sf/\Qf$. 
If $\sfM$ is $k$-nondegenerate, then   $k$ is  less or equal than $2$.
\end{theorem}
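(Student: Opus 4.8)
The plan is to reduce the statement to a combinatorial estimate in a suitable bigrading of $\st$ and then to exploit the extra rigidity of the \emph{closed} orbit. First I would fix a $\tau$-stable Cartan subalgebra $\hg\subseteq\qt\cap\tau(\qt)$ together with the grading element $E\in\hg$ of the parabolic $\qt$, so that $\qt=\bigoplus_{p\ge0}\st_p$ with $\st_p=\{X:[E,X]=pX\}$. Since $\hg$ is $\tau$-stable, the semisimple operators $[E,\,\cdot\,]$ and $[\tau(E),\,\cdot\,]$ commute and define a bigrading $\st=\bigoplus_{p,q}\st^{p,q}$; anti-$\C$-linearity of $\tau$ gives $\tau(\st^{p,q})=\st^{q,p}$. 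Hence $\qt$ (resp.\ $\tau(\qt)$, resp.\ $\qt\cap\tau(\qt)$) is the sum of the components $\st^{p,q}$ with $p\ge0$ (resp.\ $q\ge0$, resp.\ $p\ge0$ and $q\ge0$), while the components \emph{not} contained in $\qt+\tau(\qt)$ are exactly those with $p\le-1$ and $q\le-1$. Because $\qt\cap\tau(\qt)\subseteq\qt^{k}$ for all $k$ and $\sfM$ is $k$-nondegenerate, it suffices to prove $\qt^{2}=\qt\cap\tau(\qt)$, i.e.\ that no homogeneous \emph{CR direction} $Z\in\st^{a,b}$ with $a\ge0$, $b\le-1$ survives in $\qt^{2}$.

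The decisive input is the structural description of the closed orbit \cite{Wolf69}: it is characterized by the property that its isotropy $\qt\cap\tau(\qt)$ is a \emph{parabolic} subalgebra of $\st$, cut out by the $\tau$-fixed grading element $H_{0}=E+\tau(E)$, whose eigenvalue on $\st^{p,q}$ equals $p+q$. Thus $\qt\cap\tau(\qt)=\{X:[H_0,X]=mX,\ m\ge0\}$, which forces the \emph{support constraint}: if $\st^{p,q}\neq\{0\}$ and $\min(p,q)\le-1$, then $p+q\le-1$. In particular every CR direction $Z\in\st^{a,b}$ ($a\ge0$, $b\le-1$) satisfies $a+b\le-1$. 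It is precisely this constraint that fails for general $\Sf^{\tau}$-orbits and leaves room for the value $k=3$ in the previous theorem.

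Now the two-bracket argument. Let $Z\in\st^{a,b}$ be a CR direction, $a\ge0$, $b\le-1$. Then $\tau(Z)\in\st^{b,a}\subseteq\tau(\qt)$ and $[Z,\tau(Z)]\in\st^{a+b,a+b}$; by the support constraint $a+b\le-1$, so this bracket lies outside $\qt+\tau(\qt)$. Consequently, if $Z\in\qt^{1}$ we must have $[Z,\tau(Z)]=0$. Assume now $Z\in\qt^{2}$. Since $\qt^{2}$ is a subalgebra containing $\st^{0,0}\subseteq\qt\cap\tau(\qt)$, the whole $\st^{0,0}$-module $M:=\mathcal{U}(\st^{0,0})\cdot Z\subseteq\qt^{2}\cap\st^{a,b}$ consists of CR directions lying in $\qt^{1}$; polarizing the identities $[Z',\tau(Z')]=0$ over $M$ yields $[M,\tau(M)]=0$. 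Feeding this vanishing into the weak-nondegeneracy criterion (Remark~\ref{weaknndg}), I would show that $\qt$ and $\tau(M)$ generate a complex subalgebra $\ft'$ with $\qt\subseteq\ft'\subsetneq\qt+\tau(\qt)$; $k$-nondegeneracy then forces $\ft'=\qt$, hence $\tau(M)\subseteq\qt$, i.e.\ $M\subseteq\qt\cap\tau(\qt)$. As $M\subseteq\st^{a,b}$ with $b\le-1$ meets $\qt\cap\tau(\qt)$ only in $\{0\}$, this gives $M=0$ and $Z=0$, a contradiction. Therefore $\qt^{2}=\qt\cap\tau(\qt)$ and $k\le2$.

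The hard part is twofold. First, one must rigorously establish the closed-orbit characterization—that $\qt\cap\tau(\qt)$ is parabolic with grading functional proportional to $p+q$—since this is exactly the feature that separates the closed orbit from the others and that otherwise degrades the bound to $k\le3$. Second, and more delicate, is the final propagation step: turning the single vanishing $[M,\tau(M)]=0$ into the containment $M\subseteq\qt\cap\tau(\qt)$. The subtlety is that $\qt^{2}$ is defined by a \emph{nested} condition rather than a purely graded one, so closing this step requires controlling $[\qt,\tau(M)]$ through the Jacobi identity together with the support constraint, and it is here that the $k$-nondegeneracy hypothesis enters essentially.
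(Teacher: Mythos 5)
The paper does not actually prove this statement — it imports it verbatim as \cite[Th.~2.17]{MMN21} — so your proposal must stand on its own, and unfortunately its ``decisive input'' is false. Closedness of the orbit does \emph{not} imply that $\qt\cap\tau(\qt)$ is a parabolic subalgebra of $\st$, much less one cut out by $H_0=E+\tau(E)$; what characterizes the closed orbit (Wolf, cf.\ \cite{AMN06}) is that $\qt$ contains a \emph{maximally compact} $\tau$-stable Cartan subalgebra, and the actual proof in \cite{MMN21} is a root-space analysis relative to such a Cartan. Concrete counterexample: $\st^{\tau}=\su(3,1)$ acting on $\mathrm{Gr}_2(\C^4)$, whose closed orbit consists of the $2$-planes on which the Hermitian form degenerates. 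Taking the form $\langle z,w\rangle=z_0\bar w_3+z_1\bar w_1+z_2\bar w_2+z_3\bar w_0$ and base plane $\langle e_0,e_1\rangle$, the parabolic $\qt$ is block upper triangular and $\tau(\qt)$ is the stabilizer of $\langle e_0,e_2\rangle$; then $\qt\cap\tau(\qt)$ contains neither $e_{12}$ nor $e_{21}$, hence is \emph{not} parabolic. In the bigrading by $E=\tfrac12\,\mathrm{diag}(1,1,-1,-1)$ and $\tau(E)=\tfrac12\,\mathrm{diag}(1,-1,1,-1)$, the CR direction $e_{12}$ has bidegree $(1,-1)$ — a mixed bidegree with $a+b=0$, violating your support constraint — and $[e_{12},\tau(e_{12})]=-(e_{11}-e_{22})\neq 0$ while $e_{12}\in\qt^{1}$, which also refutes your intermediate claim that $Z\in\qt^{1}$ forces $[Z,\tau(Z)]=0$. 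This orbit is genuinely $2$-nondegenerate (one checks $\qt^{1}=(\qt\cap\tau(\qt))\oplus\C e_{12}$ and $[e_{12},e_{31}]=-e_{32}\notin\qt^{1}+\tau(\qt)$), i.e.\ it is exactly the kind of example in Table~II of \cite{MMN23} that the theorem must accommodate.

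There is also an internal red flag you could have caught: your support constraint proves too much. If no root had mixed bidegree, then every CR direction would sit in $\st^{0,b}$ with $b\le-1$; membership in $\qt^{1}$ would then force $[Z,\tau(W)]=0$ for \emph{all} CR directions $W$ (each such bracket has bidegree $(b_W,b_Z)$ with both entries $\le -1$, entirely outside $\qt+\tau(\qt)$ — no polarization needed, since the defining condition of $\qt^{1}$ is already linear in $W$), and a short Jacobi computation — using that $[\st^{0,q},\tau(\st^{0,b})]$ with $q\ge 1$, $b\le-1$ is mixed, hence zero — shows $\qt^{2}=\qt^{1}$, so finite nondegeneracy would yield $k\le 1$. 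That is strictly stronger than the theorem and contradicts the example above, so the flaw is not in the final propagation step you flagged (which is indeed delicate, for the nestedness reason you state) but in the closed-orbit characterization itself. With the correct distinguishing feature — a maximally compact $\tau$-stable Cartan in $\qt$, with its induced involution on roots — one only gets weaker sign constraints, and extracting the bound $k\le 2$ from them requires the finer case analysis carried out in \cite{MMN21}.
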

We also provided a complete list of all closed real orbits that are $k$-nondegenerate, with $k=1,2$, as shown in \emph{Table I} and \emph{Table II} in \cite{MMN23}.
Moreover, G. Fels provided what is likely a first 
example of a homogeneous $3$-nondegenerate CR real 
orbits (see \cite[Sect.7]{Fels2007} ) and probably unique for long time example of a $4-$nondegenerate one  \cite[Example 5.1]{FelsKaup2008}. In \cite[Sect.3]{MMN21}, we exhibit a finite nondegenerate homogeneous $CR$ manifolds with Levi order $k$ for every positive integer $k$, by constructing certain vector bundles over $\mathbb{CP}^1$. \par 
For the above reasons, we need to move beyond the semisimple class of CR algebras if we want to construct 
$k$-nondegenerate homogeneous CR manifolds with $k\geq 3$.
 We will follow  \cite{AN2004}, where the Authors addressed the problem of describing those $CR$ vector bundles over compact 
% \SM{standard CR manifold esplicita o togli}
 (locally) homogeneous $CR$ manifolds that remain standard $CR$ manifolds, i.e., their associated $CR$ algebras are special graded  extensions of semisimple graded $CR$ algebras (see ~\cite[p.238]{HS}). Let $\st,\, \sfV$ be  Lie algebras over $\K.$  An \emph{extension of $\st$ by $\sfV$} is 
a Lie algebra $\gt$ over $\K$   which is the middle term of an exact sequence 
\begin{equation*} \label{e2.1}
\begin{CD}
 0 @>>> \sfV @>{\iotaup}>> \gt @>{\piup}>> \st @>>>0
\end{CD}\end{equation*}
of Lie algebra homomorphisms (see e.g. \cite[Ch.I, \S{1.7}]{Bour13}).  
If in  the  extension $\gt$ contains a Lie subalgebra  which is a linear complement of $\ker(\iotaup)$ 
and  if also it is  an ideal in $\gt$ then $\gt$ is
isomorphic to a semidirect product 
 $\st\oplus\sfV$. 
 \begin{remark}
     Let $\st$ be a Lie algebra over $\K$ and $\sfV$ an $\st$-module. Then   $\st\,{\oplus}\,\sfV$  admits a Lie algebra  
structure given by Lie product 
\begin{equation}\label{Liebracket}
\begin{cases}
    [X,Y]=[X,Y]_{\st}\\ 
    [X,\vq]=-[\vq,X]=\rho(X)\vq\\ 
    [\vq,\wq]=0
\end{cases}\,\,\,\,\,\,\,\
 \forall X,Y\in\st,\;
 \forall\vq,\wq\in\sfV.
\end{equation}
where $[\cdot,\cdot]_{\st}$ denotes the standard Lie bracket on $\st$ and $\rho$ a representation of $\st$ on $\sfV$. 

If a  $\Z$-gradation \begin{equation}\label{2.2}
\st\,{=}\,{\bigoplus}_{{\qq\in\Z}}\st_{\qq}\end{equation}
 of $\st$ is given, 
then a  \textit{compatible $\Z$-gradation} of an $\st$-module
$\sfV$ 
is a direct sum decomposition
\begin{equation}\label{2.3}
 V=\bigoplus_{p\in\mathbb{Z}} V_p \;\;\text{such that }\;\;
 [\st_p, V_q]\subset V_{p+q},\forall p,q\in\mathbb{Z}.
\end{equation}
For a $\Z$-graded Lie algebra \eqref{2.2}
the linear map %defined by 
\begin{equation}\label{gradingelement}
D:\st\ni X\rightarrow pX \in\st,\;\;\forall\,{p}\in\mathbb{Z},\;\forall\,{X}\in\st_{p},
\end{equation}
is a derivation. 
A derivation  $\mathcal{D}$ is said to be \emph{inner} if it exist an element $E\in\st$ such that $\mathcal{D}\,{=}\,ad_{E}$. Since all  derivations 
 of a finite dimensional semisimple Lie algebra are inner
and its adjoint representation is faithful,
 a semisimple $\Z$-graded Lie algebra \eqref{2.2} always admit a unique element $E,$ which belongs to the center of $\st,$ acting as \eqref{gradingelement}.  
\end{remark}
In the following sections, we will present a more in-depth study of this example, leading to an explicit model equation. 

 \section{A $k-$nondegenerate $CR$ manifold for $k\geq 1$}

Let $\st{=}
\slt_{2}
(\C)$ be the simple Lie algebra of $2{\times}2$ traceless matrices with complex coefficients, with the split real form $\mathfrak{sl}_{2}(\mathbb{R})$ given by the matrices with real coefficients.Then $\mathfrak{s}$ and its split real form admit a canonical basis over $\mathbb{C}$ and $\mathbb{R}$, respectively, given by

\begin{equation}\label{slbase}
H=\begin{pmatrix} 1&0\\ 0&-1 \end{pmatrix}, \quad
X^{\downarrow}= \begin{pmatrix} 0&0\\ 1&0 \end{pmatrix}, \quad
X^{\uparrow}= \begin{pmatrix} 0&1\\ 0&0 \end{pmatrix},  
\end{equation} 
which satisfy the bracket relations 
\begin{equation}\label{slbaserelation} [H,X^{\downarrow}] = -2X^{\downarrow},\,\,[H,X^{\uparrow}] = 2X^{\uparrow},\,\, [X^{\uparrow},X^{\downarrow}] = H. 
\end{equation} 
The characteristic element $E{=}\frac{1}{2}H$ yields the $\Z_2$-gradation of $\st$ \begin{equation}\label{A1_symm_space_grad} \st = \st_{-1} \oplus \st_0 \oplus \st_1, \text{ with }
\begin{cases} \st_{1} = \langle X^{\uparrow} \rangle_{\C},\\ \st_0 = \langle H \rangle_{\C},\\ \st_{-1} = \langle X^{\downarrow} \rangle_{\C}. 
\end{cases} 
\end{equation}

 Let us fix the Cartan 
and Borel subalgebras 
\begin{equation*}
 \hg=\left.\left\{ 
\begin{pmatrix}
 a & \; 0\\
 0 & {-}a
\end{pmatrix} \,\right| \,a\in\C\right\},\quad \bt=\left.
\left\{ \begin{pmatrix}
 a & \; b\\
 0 & {-}a
\end{pmatrix} \,\right| \,a,b\in\C\right\}
\end{equation*}
of $\slt_{2}(\C).$

Consider  $\mathbb{CP}^1$  as the Grassamanian  $\mathrm{F}=\mathbf{SL}_{2}(\C) / \mathbf{B}$ , of complex line through the origin inside $\C^2$, where $\mathbf{B}$ is the subgroup fixing $\langle e_1\rangle$. Since  the simply connected compact Lie group 
$$
{\SU}(2)=\left.\left\{\begin{pmatrix}
 a& b\\
 -\bar{b}& \bar{a}
 \end{pmatrix}\right| |a|^{2}{+}\,|b|^{2}=1\right\}$$
 acts transitively on $\mathrm{F}=\SU(2) / \mathbf{T}^{\tau}$, where  $\mathbf{T}^{\tau}$ is the Lie group generated by $\mathrm{exp}(iH)$,  its associated $CR$ algebra at the base point $e_1\in\C^2$ is given by $(\su(2),\bt)$, with $\mathrm{Lie}(\mathbf{B})=\bt=\st_{0}\oplus\st_{1}$.

\begin{remark}
Let us remember that $\slt_{2}(\R)$ is the split real form, i.e., the space of fixed loci of the conjugation $\sigma(X) = \bar{X}$ for every $X \in \slt_{2}(\C)$. The Lie algebra $\slt_{2}(\R)$ contains a maximal compact subalgebra given by the skew-symmetric matrix $\sot(2)$, i.e., the fixed loci of the Cartan involution $\theta(X) = -X^t$. The composition of $\sigma$ and the extension of $\theta$ to the complexification gives rise to the anti-$\C$ involution $\tau = \sigma \circ \theta^{\C}$, for which the set of fixed points gives us the compact real form of the special unitary matrix:
\[
\su(2) = \left\{ \begin{pmatrix} i t & b \\ {-}\bar{b} & -it \end{pmatrix} \;\middle|\; t \in \R, \; b \in \C \right\}.
\]
While $\eqref{slbase}$ still serves as a basis for the split real form over $\R$, for the compact form, we have the so-called \emph{Pauli matrices}:  
\begin{equation}\label{Pauli base}
\sigmaup_1{=}\frac{1}{2}(X^{\uparrow} {-}X^{\downarrow}) = {\frac{1}{2}} \begin{pmatrix} 0&1\\ -1&0 \end{pmatrix}, \quad
\sigmaup_2{=}\frac{i}{2}(X^{\uparrow} {+}X^{\downarrow}) = {\frac{i}{2}} \begin{pmatrix} 0&1\\ 1&0 \end{pmatrix}, \quad
\sigmaup_3{=}\frac{i}{2}H {=} {\frac{1}{2}} \begin{pmatrix} i&0\\ 0&-i \end{pmatrix}.
\end{equation}
Then the relations $\eqref{slbaserelation}$ induce the conditions on commutators:
\begin{equation}\label{commutators}
%[\sigmaup_i, \sigmaup_j] = \epsilon_{ijk} \sigmaup_k,
 [\sigmaup_1, \sigmaup_2] =\sigmaup_3,\quad [\sigmaup_3, \sigmaup_1] =  \sigmaup_2,\quad [\sigmaup_3, \sigmaup_2] = - \sigmaup_1.
\end{equation}
\end{remark}
The representations of $\SU(2)$ are used to describe non-relativistic spin, as $\SU(2)$ is a double cover of the rotation group $\mathbf{SO}(3)$ in the Euclidean 3-space. Relativistic spin, on the other hand, is characterized by the representation theory of $\mathbf{SL}_2(\C)$, which is the complexification of $\SU(2)$ and serves as a double cover of $\mathbf{SO}({1,3})$, the Lorentz group corresponding to relativistic rotations.\par
%\SM{chiamare w z  con z e bar z}
Now, consider the vector space ${\C}[z,\bar{z}]$ of complex polynomials in $z$ and $\bar{z}$, and take the $(2k{+}1)$-dimensional subspace $\sfV$ consisting of polynomials of degree $2k$:
\begin{equation*}
 \sfV=\left.\left\{{\sum}_{h=-k}^{k}a_{h}z^{k+h}\bar{z}^{k-h}\right| a_{h}\,{\in}\,\C\right\}.
\end{equation*} 
With the identification  
 $$\C^{2k+1}\ni(a_{-k},\dots,a_k)\longleftrightarrow {\sum}_{h=-k}^{k}a_{h}z^{k+h}\bar{z}^{k-h} $$ we have that the canonical basis over $\C$ is give by \begin{equation}\label{basepol}
   \{v_h=z^{k+h}\bar{z}^{k-h}\}_{ -k\leq h\leq k} . 
 \end{equation}
 
 For each $g\in\SU(2)$, define a linear transformation $\rho(g)$ on the space $\sfV$ by the formula

\begin{equation}\label{hompol} (\rho(g)\pct)(z,\bar{z})=\pct(g^{-1}(z,\bar{z})),\,\,\forall(z,\bar{z})\in\C^2. \end{equation}
The map $\rho$, as defined above, is a representation of $\sfV$. Note that $\SU(2)$ is isomorphic to the three-sphere $\mathbf{S}^3$, which is compact and simply connected. Therefore, the exponential map from its Lie algebra $\su(2)$ to the group $\SU(2)$ is surjective. As a result, the representations of $\SU(2)$ and $\su(2)$ are in one-to-one correspondence. Consequently, every representation of the Lie algebra can be studied as a group representation, which we will continue to denote by $\rho$. Moreover, every finite-dimensional representation $\rho$ of $\mathfrak{g}^\tau$ extends uniquely to a complex-linear representation of $\mathfrak{g}$, satisfying $\rho(X + iY) = \rho(X) + i\rho(Y), \quad \text{for all } X, Y \in \mathfrak{g}^{\tau}.
$
In particular, there is a one-to-one correspondence between the irreducible real representations of $\su(2)$ and the irreducible complex representations of $\slt_2(\mathbb{C})$.
Let $\rho$ be an irreducible representation of $\slt_{2}(\C)$ in a finite-dimensional complex vector space $\sfV$.
Consider the eigenvalues $\lambda$ and eigenvectors $v$ of $H$, such that $\rho(H)v{=}\lambda v$. We shall call $\lambda$ a weight of $\rho(H)$ and $v$ a weight vector, with $V_{\lambda}{=}\{v\in\sfV | \rho(H)v{=}\lambda v\}$ being the corresponding weight space. Using the notation $\rho(X)v{=}Xv$, we have:
 \begin{equation}\label{raising operator}
HX^{\uparrow}v=HX^{\uparrow}v+X^{\uparrow}Hv-
X^{\uparrow}Hv=
(\lambda+2)X^{\uparrow}v\Rightarrow X^{\uparrow}v\in\sfV_{
\lambda+2}
 \end{equation}
 and 
 \begin{equation}\label{lowering operator}
HX^{\downarrow}v=HX^{\downarrow}v+X^{\downarrow}v-X^{\downarrow}Hv=(\lambda-2)X^{\downarrow}v\Rightarrow X^{\downarrow}v\in\sfV_{\lambda-2},
 \end{equation}
 which is why $X^{\uparrow}$ is sometimes called the \emph{raising operator}, while $X^{\downarrow}$ is called the \emph{lowering operator}, as applying $X^{\uparrow}$ moves us “up” a weight space, and applying $X^{\downarrow}$ moves us “down” a weight space.
We remember that:
\begin{proposition}
\label{unique representation} For each integer $m{\geq}0$, there exists an irreducible complex representation of $\slt_{\C}(2)$ with dimension $m{+}1$. Any two irreducible complex representations of $\slt_{\C}(2)$ with the same dimension are isomorphic. If $\rho'$ is an irreducible complex representation of $\slt_{\C}(2)$ with dimension $m{+}1$, then $\rho'$ is isomorphic to the representation $\rho$ described in \eqref{hompol}.%\end{remark}
\end{proposition}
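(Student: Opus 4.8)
The plan is to establish this classical result by the method of highest weights, built on the raising and lowering operators recorded in \eqref{raising operator} and \eqref{lowering operator}. The only input that is not purely formal is the diagonalizability of $\rho'(H)$ for an arbitrary finite-dimensional representation $\rho'$ on a complex space $W$, and I would deduce it from the compactness of $\SU(2)\cong\mathbf{S}^3$. Averaging an arbitrary Hermitian inner product over the group turns $\rho'$ into a unitary representation, so the generator $\rho'(\sigmaup_3)=\tfrac{i}{2}\rho'(H)$ acts as a skew-Hermitian operator and is diagonalizable with purely imaginary eigenvalues; hence $\rho'(H)$ is diagonalizable with real eigenvalues, giving a weight decomposition $W=\bigoplus_{\lambda}W_{\lambda}$ in which $X^{\uparrow}\colon W_{\lambda}\to W_{\lambda+2}$ and $X^{\downarrow}\colon W_{\lambda}\to W_{\lambda-2}$ by \eqref{raising operator}--\eqref{lowering operator}.

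Next I would run the highest-weight argument. Choosing a weight $\lambda_{0}$ of maximal value and a nonzero $v_{0}\in W_{\lambda_{0}}$, finite-dimensionality forces $X^{\uparrow}v_{0}=0$. The iterates $(X^{\downarrow})^{j}v_{0}$ lie in the distinct weight spaces $W_{\lambda_{0}-2j}$, so they are linearly independent until they vanish; let $m$ be the largest index with $(X^{\downarrow})^{m}v_{0}\neq 0$. A short induction from the bracket relations \eqref{slbaserelation} yields
\begin{equation*}
X^{\uparrow}(X^{\downarrow})^{j}v_{0}=j\,(\lambda_{0}-j+1)\,(X^{\downarrow})^{j-1}v_{0},
\end{equation*}
and evaluating it at $j=m+1$, where the left-hand side vanishes while $(X^{\downarrow})^{m}v_{0}\neq 0$, forces $\lambda_{0}=m$. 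The span of $v_{0},X^{\downarrow}v_{0},\dots,(X^{\downarrow})^{m}v_{0}$ is a nonzero $\slt_{2}(\C)$-invariant subspace, hence all of $W$ by irreducibility; thus $\dim W=m+1$, the weights are $m,m-2,\dots,-m$, and the three bracket relations pin down the action completely on this basis.

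The remaining assertions then follow formally. For uniqueness, two irreducible representations of dimension $m+1$ both have highest weight $m$, and sending one highest-weight vector to the other extends, through the identical basis formulas obtained above, to an isomorphism of $\slt_{2}(\C)$-modules. For existence and the identification with \eqref{hompol}, I would compute the weights of the polynomial action on the basis \eqref{basepol} directly, finding (with the convention of \eqref{hompol}) that $\rho(H)v_{h}=-2h\,v_{h}$; thus the weights are $-2k,-2k+2,\dots,2k$, the vector $v_{-k}$ is a highest-weight vector of weight $2k$ with $X^{\uparrow}v_{-k}=0$, and its $X^{\downarrow}$-iterates span $\sfV$. Hence \eqref{hompol} is irreducible of dimension $2k+1$ and highest weight $2k$, so by uniqueness it is the model of that dimension, while the analogous degree-$m$ polynomial construction realizes the irreducible of dimension $m+1$ for every $m\geq 0$.

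The step I expect to be the main obstacle is the diagonalizability of $\rho'(H)$: argued purely inside $\slt_{2}(\C)$ it would require Weyl's complete reducibility theorem or an explicit Casimir computation, whereas here the compactness of $\SU(2)$ — already invoked above to pass between representations of $\su(2)$ and of $\SU(2)$ — furnishes it immediately. Everything downstream is the bookkeeping of weights.
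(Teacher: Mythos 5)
Your proof is correct and follows exactly the classical highest-weight argument that the paper's own setup points to — the paper states this proposition without proof, as a recollection of the standard classification, and the machinery you use (the raising/lowering operators \eqref{raising operator}--\eqref{lowering operator}, plus the unitarian trick via compactness of $\SU(2)$ and the $\su(2)$--$\slt_2(\C)$ correspondence) is precisely what the surrounding text provides. One cosmetic remark: your computation $\rho(H)v_h=-2h\,v_h$ is faithful to the literal convention of \eqref{hompol} (with $g^{-1}$), whereas the paper's identification \eqref{isoderivatepart} gives $Hv_h=2h\,v_h$; the two realizations differ by an automorphism of $\slt_2(\C)$ and have the same weight set, so nothing downstream is affected.
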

To better see the actions  \eqref{raising operator},\eqref{lowering operator} on $V_h$ consider 
the elements in \eqref{slbase}  identified by isomorphism  with \begin{equation} \label{isoderivatepart}
 H = z\partial_z - \bar{z}\partial_{\bar{z}},\ X^{\uparrow} = z\partial_{\bar{z}},\ X^{\downarrow} =  \bar{z}\partial_z
 \end{equation} since they satisfy the canonical relations \eqref{slbaserelation}: \begin{align*} [X^{\uparrow},X^{\downarrow}] &= z\partial_z - \bar{z}\partial_{\bar{z}} = H,\\ [H,X^{\uparrow}] &= z\partial_{\bar{z}} + z\partial_{\bar{z}} = 2X^{\uparrow},\\ [H,X^{\downarrow}] &= -\bar{z}\partial_z - \bar{z}\partial_z = -2X^{\downarrow}. \end{align*}
Fix $m{=}2k$, then by the action of the characterics element $E{=}\frac{1}{2}H$  on $\sfV$ by $\rho$ we have the $\Z$-gradation \begin{equation}\label{GradV}
    \sfV{=}\sum_{h=-k}
^{k}\sfV_{h}
\end{equation}
with $\sfV_{h}{=}\langle v_h\rangle_{\C}$
 and $v_h$ given by  formula \eqref{basepol}.
The unique extension of $\rho$, which we will still denote by $\rho$, from $\mathfrak{su}(2)$ to $\mathfrak{sl}_2(\mathbb{C})$ is given by $\rho(X + iY) {=} \rho(X) {+} i \rho(Y)$.  Then, using the uniqueness of the representation and the isomorphism \eqref{isoderivatepart}, we conclude that the extension of the conjugation $\tau$ on $\mathsf{V}$, which we still denote by $\tau$, involutively switches $z$ with $\bar{z}$,  sending the weight space $\langle v_h \rangle_{\C}$ to $\langle v_{-h} \rangle_{\C}$. Now consider on $\st$ the Borel subalgebra of upper  triangular matrix $\mathfrak{b}=\langle H,X^{\uparrow}\rangle_{\C}$
and define the complex subalgebra of lower triangular 
matrix given by the opposite Borel algebra $\mathfrak{b}^{opp}=\tau 
(\mathfrak{b}){=}\langle H,X^{\downarrow}\rangle_{\C}$. 
For $k\geq 1$ define the  semidirect product given by $\gt=\slt_{2}(\C)\oplus \sfV$ which by \eqref{A1_symm_space_grad} and \eqref{GradV} it is still a $\Z$ graded Lie algebra.  
The real form of $\gt$ is given by the fixed loci of $\tau$ 
$\gt^{\tau}=\su(2)\oplus \sfV^{\tau}$, where 
$\sfV^{\tau}$ is isomorphic to the unique irreducible real $\su(2)$-representation  of real dimension $2k+1$, which is the real form of 
 the unique one of $\slt_{\C}(2)$ of homogeneous polinomials in two variable of degree $2k$. % this time with base \eqref{basepol} but over $i\R$.
Define now the complex vector subspace 
\begin{equation}\label{struttura CR}
\ft=\mathfrak{b}\oplus \sfV^+\subset\gt
\end{equation}
with Lie bracket \eqref{Liebracket},  where $\sfV^+= \sum_{h>0} \sfV_h$ .
Then we obtain the following 

\begin{lemma}\label{computation}
Let $\ft$ be as in \eqref{struttura CR}. Then, we have:  

\begin{enumerate}
\item $\ft$ is a complex Lie subalgebra of $\gt$.
\item $\tau\ft$ is the subalgebra $\mathfrak{b}^{opp} \oplus \sfV^-$, where $\sfV^- = \sum_{h<0} V_h$.
%\SM{V++V- al posto di neq0}
\item $\ft + \tau(\ft) = \slt_{\C}(2) \oplus \sfV^+ \oplus \sfV^-$. 
\item $\ft \cap \mathfrak{g}^{\tau} = \langle iH \rangle_{\R}$, i.e., the isotropy subalgebra coincides with the maximally compact $\tau$-stable Cartan subalgebra $\mathfrak{t}^{\tau}$ of $\su(2)$ and $\ft \cap \tau\ft = \C \otimes (\ft \cap \mathfrak{g}_{\R}) = \mathfrak{b} \cap \mathfrak{b}^{opp}$, which is the Cartan subalgebra $\mathfrak{t} = \langle H \rangle_{\C}$ of $\slt_{\C}(2)$.
\item $(\gt^{\tau}, \ft)$ is a $k$-nondegenerate CR algebra of CR codimension $1$.
\end{enumerate}
\end{lemma}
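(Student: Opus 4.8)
The plan is to prove all five assertions by direct computation in the explicit basis, exploiting the weight decomposition. From \eqref{isoderivatepart} and \eqref{basepol} one reads off the action $\rho(H)v_h = 2h\,v_h$, $\rho(X^{\uparrow})v_h = (k-h)v_{h+1}$ and $\rho(X^{\downarrow})v_h = (k+h)v_{h-1}$, together with $[\sfV,\sfV]=0$ and the facts $\tau(\sfV_h)=\sfV_{-h}$, $\tau(H)=-H$, $\tau(\bt)=\bt^{opp}$. The guiding structural observation, coming from (3), is that $\ft+\tau(\ft)$ is the sum of all of $\slt_2(\C)$ with every weight space $\sfV_h$ for $h\neq 0$; hence the only obstruction to a bracket lying in $\ft+\tau(\ft)$ is a nonzero component along the central weight space $\sfV_0=\langle v_0\rangle_{\C}$, and this single missing direction is what forces CR-codimension $1$.

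For (1) I would verify closedness of $\ft=\bt\oplus\sfV^+$ under the bracket \eqref{Liebracket}: $[\bt,\bt]\subseteq\bt$ is the Borel relation, $[\sfV^+,\sfV^+]=0$, and $\rho(\bt)\sfV^+\subseteq\sfV^+$ because $\rho(H)$ preserves each $\sfV_h$ while $\rho(X^{\uparrow})$ sends $\sfV_h$ into $\sfV_{h+1}$ (with $\rho(X^{\uparrow})\sfV_k=0$), so no zero or negative weight is created. Assertions (2) and (3) then follow by applying the anti-linear $\tau$ to (1) and taking sums, using $\tau(\bt)=\bt^{opp}$, $\tau(\sfV^+)=\sfV^-$, and $\bt+\bt^{opp}=\slt_2(\C)$. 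For (4), the intersection $\ft\cap\tau(\ft)=(\bt\cap\bt^{opp})\oplus(\sfV^+\cap\sfV^-)=\langle H\rangle_{\C}$ is immediate from disjointness of the weight spaces; intersecting further with $\gt^{\tau}$ amounts to taking $\tau$-fixed vectors in $\langle H\rangle_{\C}$, and since $\tau(H)=-H$ the fixed set is exactly $\langle iH\rangle_{\R}=\mathfrak{t}^{\tau}$.

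The heart of the lemma is (5), where the real work is to compute the chain \eqref{Fremanseq} explicitly. The plan is to prove by induction on $j$ that
\[
\ft^{j}=\langle H\rangle_{\C}\oplus\langle v_{j+1},\dots,v_{k}\rangle_{\C}\qquad(1\le j\le k).
\]
For the base step one tests each generator of $\ft$ against $\tau(\ft)$ modulo $\ft+\tau(\ft)$: the element $H$ survives, $X^{\uparrow}$ is expelled because $[X^{\uparrow},v_{-1}]=(k+1)v_0$ has a $v_0$-component, and $v_1$ is expelled because $[v_1,X^{\downarrow}]=-(k+1)v_0$ does, whereas each $v_h$ with $h\ge 2$ survives since $[v_h,X^{\downarrow}]=-(k+h)v_{h-1}$ still lands in $\sfV^+$. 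For the inductive step, the only brackets that can escape $\ft^{j}+\tau(\ft)$ are the lowering ones $[v_h,X^{\downarrow}]=-(k+h)v_{h-1}$ (all $[\sfV^{\pm},\sfV^{\mp}]$ vanish and $[H,\cdot]$ is weight-preserving); since the $\sfV^+$-part of $\ft^{j}+\tau(\ft)$ is $\langle v_{j+1},\dots,v_k\rangle_{\C}$, the borderline case $h=j+1$ yields $v_j\notin\ft^{j}+\tau(\ft)$ and removes $v_{j+1}$, while every $v_h$ with $h\ge j+2$ remains. Thus the chain descends strictly, $\ft^{k-1}=\langle H,v_k\rangle_{\C}\supsetneq\ft^{k}=\langle H\rangle_{\C}=\ft\cap\tau(\ft)$, and stabilizes there, so by Definition~\ref{CRalg} the pair is $k$-nondegenerate. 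Finally the invariants follow by counting: $\dim_{\C}\ft-\dim_{\C}(\ft\cap\tau(\ft))=(k+2)-1=k+1$ is the CR-dimension, while $\ft+\tau(\ft)$ has complex codimension one in $\gt$ (the missing $\sfV_0$), and since both $\gt$ and $\ft+\tau(\ft)$ are $\tau$-stable this is the CR-codimension, equal to $1$.

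The step I expect to be the main obstacle is the inductive description of $\ft^{j}$, precisely because the reference space $\ft^{j}+\tau(\ft)$ itself loses one positive weight at each stage, so one must track carefully which $v_{h-1}$ remain available; the entire descent hinges on the single borderline bracket $[v_{j+1},X^{\downarrow}]\propto v_{j}$. The abelian-ideal structure of $\sfV$ is what makes this manageable, since it forces all $[\sfV^{\pm},\sfV^{\pm}]$ to vanish and isolates $X^{\downarrow}$ as the sole mechanism driving the chain downward; the remaining care is the bookkeeping verifying that no other bracket, in particular those involving $H$ or the vanishing products in $\sfV$, can re-enlarge the space.
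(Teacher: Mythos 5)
Your proposal is correct and takes essentially the same route as the paper: items (1)--(4) by the observation $[\bt,\sfV^+]\subseteq\sfV^+$ plus conjugation by $\tau$, and item (5) by explicitly computing the Freeman sequence down to $\ft^{k}=\langle H\rangle_{\C}$, which the paper records as \eqref{FS} via the weight diagram \eqref{tanbella}. Your explicit structure constants $[v_h,X^{\downarrow}]=-(k+h)v_{h-1}$, the induction on $\ft^{j}=\langle H,v_{j+1},\dots,v_k\rangle_{\C}$, and the codimension count via the missing $\sfV_0$ just spell out the bookkeeping the paper leaves implicit, and they check out.
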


\begin{proof}

We start by proving \((1)\), i.e., that \(\qt\) is a Lie subalgebra of \(\gt\).  
Since \([\sfV, \sfV] = 0\) and \(\st\) is a Lie algebra, we only need to verify this for the mixed Lie bracket, which holds because \([\mathfrak{b}, V^+] \subseteq V^+\). Statement \((2)\) follows by conjugating with \(\tau\). In fact, \(\tau{[\mathfrak{b}, \sfV^+]} = [\mathfrak{b}^{opp},{\sfV}^-] \subset \sfV^-\). From \((1)\) and \((2)\), it follows that \((3)\) and \((4)\) hold. For \((5)\), we explicitly construct the Fremann sequence \eqref{Fremanseq} using \((4)\) and the fact that \(\tau{\sfV_{h}} = \sfV_{-h}\). The computation can be visualized using the following diagram:  

\begin{equation}\label{tanbella}
\begin{ytableau}
v_{k} & v_{k{-}1} & \dots & v_{2} & v_{1} & v_{0} & v_{-1} & v_{-2} & \dots & v_{1{-}k} & v_{-k} \cr 
\setminus & \setminus & \setminus & \setminus & X^{\uparrow} & H & X^{\downarrow} & \setminus & \setminus & \setminus & \setminus 
\end{ytableau}
\end{equation}
which leads to  the Freeman's sequence

\begin{equation}\label{FS}
\begin{aligned}
\ft^{1} &= \{X \in \ft \mid [X, \ft] \subset \ft + \tau\ft\} = \langle H, v_{2}, \dots, v_{k} \rangle_{\mathbb{C}}, \\
\ft^{2} &= \{X \in \ft^1 \mid [X, \ft] \subset \ft^{1} + \tau\ft\} = \langle H, v_{3}, \dots, v_{k} \rangle_{\mathbb{C}}, \\
\vdots \\
\ft^{h} &= \{X \in \ft^{h-1} \mid [X, \ft] \subset \ft^{h-1} + \tau\ft\} = \langle H, v_{h+1}, \dots, v_{k} \rangle_{\mathbb{C}}, \\
\vdots \\
\ft^{k} &= \langle H \rangle_{\mathbb{C}} = \mathfrak{t}.
\end{aligned}
\end{equation}
This confirms that \((\gt^{\tau}, \ft)\) is a \(k\)-\textit{nondegenerate} CR algebra by definition.

\end{proof}

\begin{remark}
Every subalgebra $\ft'$ of $\gt$ containing $\ft$,  have to contain  $v_0$ by gradation, implying 
that  $(\gt^{\tau},\ft)$ is \emph{weak nondegenerate}, which is  equivalent to  $k-$ 
nondegeneracy, see  Remark \ref{weaknndg}.
\end{remark}

We notice that the  semidirect product $\gt^{\tau} {=} \mathfrak{su}(2) \oplus \sfV^{\tau}$ is a \textit{Levi-Malcev decomposition} (\cite[Sect.1.5]{MMN2020}), where $\sfV^{\tau}$ is the radical. We have indeed:

\begin{remark}\label{LinearForm}
Let $\sfV^{\tau}$ be the  $2k{+}1$ dimensional real irreducible representation  of $\su(2)$. Then the  extension $\gt^\tau=\su(2)\oplus\sfV^{\tau}$ can be representend by the matrices of the form

\begin{equation}\label{matrixform}
    \begin{pmatrix}
        {i2kt}&{-2kb}&&&&\vline &a_k\\
       \bar {b}&{i2(k{-}1)t}&\ddots&&&\vline&\\
       &\ddots&\ddots&&&\vline&\vdots\\
    %   &&\ddots&\ddots&\ddots&&\vline&\\
           &&&{{-}2i(k{-}1)t}&{{-}b}&\vline&\\
        &&&{2k\bar b}&{{-}2ikt}&\vline&a_{-k}\\\hline
        0&&\dots&&0&\vline&0\\
    \end{pmatrix}
\end{equation}
with $b \in \mathbb{C}$, $t \in \mathbb{R}$, $a_0 \in \mathbb{R}$, and $a_k \in \mathbb{C}$, for $k \in \{-k, \dots, -1, 1, \dots, k\}$, such that $a_{-j} = (-1)^j \bar{a_j}$. 

Indeed, the action of $\{H, X^{\uparrow}, X^{\downarrow}\}$ can be expressed in matrix form as a linear transformation (see \cite[p.26-27]{Samelson88}). 
%Similarly, we consider the action of $\gt^{\tau}$ on itself. 
Then the Pauli basis \eqref{Pauli base} is given by the matrices:
 $$\Sigma_1=\frac{1}{2}\begin{pmatrix}
    0&-2k&0&&&\dots&&&&0\\
    1&0&(-2k+1)&&&&&&&0\\
    0&2 &0&&&&&&&\\
    &&&\ddots&&&&&&\\
    &&&&0&-(k+1)&0&&&\\
    \vdots&&&&k&0&-k&&&\vdots\\
    &&&&0&(k+1)&0&&&\\
    &&&&&&&\ddots&&0\\ 
    &&&&&&&&0&-1\\
    0&&&&&\dots&&0&2k&0\\
\end{pmatrix}$$

 $$\Sigma_2=\frac{1}{2}\begin{pmatrix}
    0&2ki&0&&&\dots&&&&0\\
    i&0&(2k-1)i&&&&&&&0\\
    0&2i &0&&&&&&&\\
    &&&\ddots&&&&&&\\
    &&&&0&(k+1)i&0&&&\\
    \vdots&&&&ki&0&ki&&&\vdots\\
    &&&&0&(k+1)i&0&&&\\
    &&&&&&&\ddots&&0\\ 
    &&&&&&&&0&i\\
    0&&&&&\dots&&0&2ki&0\\
\end{pmatrix}$$
 and 
 $$\Sigma_3{=}\frac{i}{2}\textrm{diag}(k,k-1,\dots,1,0,-1,\dots,-k+1,-k).
 $$
The real span of $\Sigma_1, \Sigma_2, \Sigma_3$ gives rise to the upper-left matrix block of the matrices \eqref{matrixform} isomorphic to $\su(2)$,
while the last column represents $\sfV^{\tau}$. 
\end{remark}
The subalgebra $\ft \cap \gt^\tau $  is a $1$ dimensional toral Lie algebra with exponential
$$\mathbf{T}^{\tau}{=}{\left\{ 
\textrm{diag}(\exp (ikt),
\dots,\exp (it),1,\exp (-it),\dots,
,\exp (-kit),1)
 \,|\,
 t{\in}\R
\right\}}
$$
 isomorphic to the modulus one complex number and hence closed.  The homogeneous space $\sfM{=}\Gf^\tau/\mathbf{T}^{\tau}$, with $\textrm{Lie}(\Gf^\tau)=\gt^\tau$, is a well defined homogeneous manifold of real dimension dimension $2k{+}3$.
Let $\ft\subset\gt$  be as in \eqref{struttura CR}, 
then by Section \ref{CRalgebra} and 
the isomorphism $\textrm{T}\,\Gf^\tau/\mathbf{T}^{\tau}\simeq\Gf^{\tau}\times_{\mathbf{T}^{\tau}}\gt^{\tau}/\mathfrak{t}^{\tau}$ we have that 
\begin{equation}\label{CRS}
\Ta^{0,1}\sfM=\Gf^{\tau}\times_{\textbf{T}^{\tau}}(\ft/\mathfrak{t}),%\textit{ with }g\in\textbf{G}^{\R},
\end{equation}
define a $\Gf^\tau$-invariant $CR$ structure on the homogeneous manifold $\sfM=\textbf{G}^{\tau}/\textbf{T}^{\tau}$.
Since  $\textbf{G}^{\tau}$ is connected and $\textbf{T}^{\tau}$ is a closed subgroup having one connected components, using \cite[Th.A]{Most62} we obtain:
\begin{theorem}\label{thAMost}
Consider the $CR$ algebra $(\mathfrak{g}^\tau, \ft)$ given by Lemma \eqref{computation}. Let $\mathrm{SU}(2)$ be a maximal compact subgroup of $\mathbf{G}^{\tau}$ with Cartan subgroup $\mathbf{T}^{\tau} \subset \mathrm{SU}(2)$. Let $\sfV^{\tau}$ be the closed $(2k+1)$-dimensional irreducible $\mathrm{SU}(2)$-representation in $\mathbf{G}^{\tau}$ such that $  \mathbf{G}^{\tau} = \mathrm{SU}(2) \times \sfV^{\tau}$
as a topological direct product.
We define the homogeneous $CR$ manifold  
\begin{equation} \label{CRM}
    \mathsf{M} = \mathbf{G}^{\tau} / \mathbf{T}^{\tau} \simeq \mathrm{SU}(2) \times_{\mathbf{T}^{\tau}} \sfV^{\tau},
\end{equation}
where $(gk, v) \sim (g,\rho (k)v)$ for all $g \in \mathbf{G}^{\tau}, k \in \mathbf{T}^{\tau}, v \in \sfV^{\tau}$. Then the $CR$ algebra of Lemma \ref{computation} defines on $\mathsf{M}$ a $\mathfrak{g}^{\tau}$-invariant $CR$ structure $\Ta^{0,1} \mathsf{M} = \Gf^{\tau}\times_{\mathbf{T}^{\tau}}(\ft/\mathfrak{t})$, of CR dimension $k+1$ and CR codimension $1$, which is $k$-nondegenerate. This provides a fibration with  Euclidean fiber over $\mathbb{CP}^1$. \qed

\end{theorem}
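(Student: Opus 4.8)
The statement packages the content of Lemma~\ref{computation} into a global geometric object, so the work is to globalise the CR-algebra datum and then read off the invariants. First I would fix the group picture. The Levi--Malcev decomposition $\gt^\tau=\su(2)\oplus\sfV^\tau$ together with $[\sfV,\sfV]=0$ shows that $\sfV^\tau\cong\R^{2k+1}$ is an abelian ideal, so $\Gf^\tau=\SU(2)\ltimes\sfV^\tau$ with $\SU(2)$ a maximal compact subgroup and $\sfV^\tau$ the Euclidean radical; as a smooth manifold $\Gf^\tau\cong\SU(2)\times\sfV^\tau$. Since $\Gf^\tau$ is connected and $\mathbf{T}^\tau\subset\SU(2)$ is closed with a single connected component, \cite[Th.~A]{Most62} applies and presents $\sfM=\Gf^\tau/\mathbf{T}^\tau$ as a genuine fibre bundle, with $\mathbf{T}^\tau=\exp(\mathfrak{t}^\tau)$ equal to the full isotropy.

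Next I would make the identification \eqref{CRM} explicit. Since $\mathbf{T}^\tau\subset\SU(2)$, I would set
\begin{equation*}
\Phi\colon\Gf^\tau/\mathbf{T}^\tau\longrightarrow\SU(2)\times_{\mathbf{T}^\tau}\sfV^\tau,\qquad (g,v)\mathbf{T}^\tau\longmapsto[g,\rho(g^{-1})v],
\end{equation*}
check that it respects the twisting relation $(gk,v)\sim(g,\rho(k)v)$, and exhibit the smooth inverse $[g,w]\mapsto(g,\rho(g)w)\mathbf{T}^\tau$. Composing with the bundle projection onto $\SU(2)/\mathbf{T}^\tau=\CP^1$ then displays $\sfM$ as a rank-$(2k+1)$ real vector bundle over $\CP^1$, i.e.\ the asserted fibration with Euclidean fibre.

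Then I would transport the CR data. Following the dictionary of Section~\ref{CRalgebra}, the CR algebra $(\gt^\tau,\ft)$ assigns to $\sfM$ the $\Gf^\tau$-invariant bundle $\Ta^{0,1}\sfM=\Gf^\tau\times_{\mathbf{T}^\tau}(\ft/\mathfrak{t})$ of \eqref{CRS}; this is well defined because $\mathbf{T}^\tau$ is connected with Lie algebra $\mathfrak{t}^\tau=\langle iH\rangle_\R$ and $[\mathfrak{t}^\tau,\ft]\subset\ft$, so $\mathrm{Ad}(\mathbf{T}^\tau)$ preserves $\ft$ and $\mathfrak{t}$. It has constant rank (being homogeneous), is formally integrable (since $\ft$ is a subalgebra), and satisfies $\Ta^{0,1}\sfM\cap\overline{\Ta^{0,1}\sfM}=0$ by Lemma~\ref{computation}(4). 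The numerical invariants are then bookkeeping: $\dim_\C\ft=k+2$ and $\dim_\C(\ft\cap\tau\ft)=1$ give CR-dimension $k+1$, while $\dim_\R\sfM=2k+3$ forces CR-codimension $1$; and Lemma~\ref{computation}(5) gives $k$-nondegeneracy of the CR algebra, which by the same dictionary is $k$-nondegeneracy of $\sfM$ at the base point, hence at every point by homogeneity.

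The routine verifications (well-definedness of $\Phi$, the bracket check $[\mathfrak{t}^\tau,\ft]\subset\ft$, and the dimension count) are immediate. The genuinely structural point, and the reason \cite[Th.~A]{Most62} is invoked, is the globalisation: one must know that the abstract isotropy $\ft\cap\gt^\tau=\mathfrak{t}^\tau$ integrates to the closed, connected subgroup $\mathbf{T}^\tau$, so that $\sfM$ is a manifold on which $\ft/\mathfrak{t}$ assembles into a well-defined constant-rank integrable subbundle. I expect this to be the main obstacle; everything else follows from the data already produced in Lemma~\ref{computation}.
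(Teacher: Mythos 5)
Your proposal is correct and takes essentially the same route as the paper, which states the theorem with a \qed precisely because its proof is the preceding discussion: the explicit closed one-dimensional torus $\mathbf{T}^{\tau}$, the associated bundle $\Ta^{0,1}\sfM=\Gf^{\tau}\times_{\mathbf{T}^{\tau}}(\ft/\mathfrak{t})$ of \eqref{CRS}, the CR dimension/codimension and $k$-nondegeneracy read off from Lemma~\ref{computation}, and Mostow's Theorem~A applied to the connected group $\Gf^{\tau}$ with closed connected isotropy to obtain the Euclidean fibration over $\CP^1$. Your explicit diffeomorphism $\Phi$ and the $\mathrm{Ad}(\mathbf{T}^{\tau})$-invariance check merely spell out details the paper leaves implicit.
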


    The elements of the   sequence \eqref{FS},  encoding the iterated Levi kernels, are given by 
\begin{equation}
    \mathcal{F}^{h>0}=\Gf^{\tau}\times_{\textbf{T}^{\tau}}(\ft^h/\mathfrak{t}),
\end{equation}
with elements of  $\ft^{h> 0}$ having linear form  by Remark \ref{LinearForm}

\begin{equation*}
    \left\{\left(
\begin{array}{c c c c c c c c c |c}
 kt&&&&&&&&&a_k\\
        &\ddots&&&&&&&&\vdots\\
        &&ht&&&&&&&a_h\\
         &&&0&&&&&&0\\
         &&&&\ddots&&&&&\vdots\\
         &&&&&0&&&&0\\
          &&&&&&-ht&&&a_{-h}\\
         &&&&&&&\ddots&&\vdots\\
         &&&&&&&&kt&a_{-k}\\
         \hline
          0&&&&\dots&&&&&0
\end{array}
\right)\,|\,t,a_{-k},\dots,a_{h-},a_{h},\dots, a_{k}\in\C\right\}.
\end{equation*}

By \eqref{PCS}, we can obtain a partial complex structure
$\textrm{J}_{\sfM} \in \mathrm{End}(\Ta_\R \sfM)$, constructed from the CR algebra $(\gt^{\tau},\ft)$ given in Lemma \ref{computation}, as follows. Consider the totally complex $CR$ algebra $(\su(2), \mathfrak{b})$ of $\mathbb{CP}^1$. Then, at the point $\pct = e_{\SU(2)}\mathbf{T}^\tau$, the complex structure $\textrm{J}_{\mathbb{CP}^1}$ is given by the adjoint action of the third element of the Pauli basis, i.e., $2\sigma_3 = iH$, on $\su(2)/\mathfrak{t}^\tau$.
Consider the action given by the representation $\rho$ of $\sigma_3$ on $\sfV^\tau$. Due to this, we can define a non-internal partial complex structure  $\textrm{J}_{\sfM}$ at $\pct' = e_{\mathbf{G}^\tau}\mathbf{T}^\tau \in \sfM$:

\begin{proposition}\label{partialcomplexstructure}
  Consider the $CR$ manifold $(\sfM, \Ta^{0,1} \sfM)$ given by \eqref{CRM} and \eqref{CRS}, respectively.
The linear map
\begin{equation}\label{CS}
\textrm{J}_{\sfM,\pct}(X) =
\frac{1}{h}\rho(\sigma_3)(X),  
\end{equation}
if $ X\in((\ft + \tau\ft) \cap \gt^\tau) / \mathfrak{t}^\tau $, where $h\neq 0$ is such that $X\in \langle v_h+\tau (v_{h})\rangle_{\R}$, defines the unique partial complex structure associated with the $CR$ algebra $(\gt^\tau, \ft)$ at
$\pct = e_{\Gf^\tau}\mathbf{T}^\tau$ on
$\Ta_\R\sfM_{\pct} {\simeq} = ((\ft + \tau\ft) \cap \gt^\tau) / \mathfrak{t}^\tau$. \qed

\end{proposition}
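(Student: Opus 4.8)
The plan is to identify the operator \eqref{CS} with the partial complex structure by checking directly that it obeys the characterization \eqref{PCS}; since the Proposition giving \eqref{PCS} already guarantees that there is a \emph{unique} such structure at $\pct$, it suffices to exhibit an $\R$-linear $\textrm{J}_{\sfM,\pct}$ with $\textrm{J}_{\sfM,\pct}^2=-\mathrm{Id}$ satisfying $X+\sqrt{-1}\,\textrm{J}_{\sfM,\pct}(X)\in\ft$. First I would pin down the model of the tangent space. From Lemma \ref{computation}(3)--(4), namely $\ft+\tau\ft=\slt_{2}(\C)\oplus\sfV^+\oplus\sfV^-$ and $\ft\cap\gt^\tau=\langle iH\rangle_\R=\mathfrak{t}^\tau$, and from $\tau(v_h)=v_{-h}$, one gets the $\tau$-fixed decomposition
\[
\Ta_\R\sfM_{\pct}\simeq\big((\ft+\tau\ft)\cap\gt^\tau\big)/\mathfrak{t}^\tau=\big(\su(2)/\mathfrak{t}^\tau\big)\oplus\bigoplus_{h=1}^{k}P_h,\qquad P_h=\langle v_h+\tau(v_h),\ i(v_h-\tau(v_h))\rangle_\R,
\]
where the first summand is the tangent space to the $\CP^1$-factor and each plane $P_h$ is a fibre direction attached to the weight pair $\{h,-h\}$.

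On each $P_h$ I would read off the action of $\rho(\sigma_3)$ from \eqref{isoderivatepart}: since $\rho(H)v_h=2h\,v_h$ and $\sigma_3=\tfrac{i}{2}H$, we have $\rho(\sigma_3)v_h=ih\,v_h$ and $\rho(\sigma_3)\tau(v_h)=-ih\,\tau(v_h)$, so a short computation yields
\[
\tfrac1h\rho(\sigma_3)\big(v_h+\tau(v_h)\big)=i\big(v_h-\tau(v_h)\big),\qquad \tfrac1h\rho(\sigma_3)\big(i(v_h-\tau(v_h))\big)=-\big(v_h+\tau(v_h)\big).
\]
Thus $\tfrac1h\rho(\sigma_3)$ is an $\R$-linear endomorphism of $P_h$ squaring to $-\mathrm{Id}$. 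To match \eqref{PCS} I would then compute, for $X=v_h+\tau(v_h)$, that $X+\sqrt{-1}\,\textrm{J}_{\sfM,\pct}(X)$ collapses to the single weight line $2\tau(v_h)$; by \eqref{struttura CR} and Lemma \ref{computation} this lies in $\sfV^+\subset\ft$ exactly for the sign of $h$ making that weight positive, which is what certifies that $\textrm{J}_{\sfM,\pct}$ recovers $\Ta^{0,1}_\pct=\ft$ and not its conjugate.

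The $\CP^1$-summand $\su(2)/\mathfrak{t}^\tau=\langle\sigma_1,\sigma_2\rangle_\R$ I would handle by the same test inside the semisimple block, using $\sigma_1=\tfrac12(X^{\uparrow}-X^{\downarrow})$, $\sigma_2=\tfrac{i}{2}(X^{\uparrow}+X^{\downarrow})$ and the relations \eqref{commutators}: one finds $\sigma_1+\sqrt{-1}(-\sigma_2)=X^{\uparrow}$ and $\sigma_2+\sqrt{-1}\,\sigma_1=iX^{\uparrow}$, both in $\mathfrak{b}\subset\ft$, so the structure induced there is $\pm\mathrm{ad}(\sigma_3)$, the integrable structure of $\CP^1=\SU(2)/\mathbf{T}^\tau$ coming from the totally complex $CR$ algebra $(\su(2),\mathfrak{b})$. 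Since $\sigma_3\in\mathfrak{t}^\tau$ acts trivially on $\su(2)/\mathfrak{t}^\tau$ while $\rho(\sigma_3)$ acts on the complementary summand $\sfV^\tau$, the block-diagonal operator descends well-definedly to the whole quotient.

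The main obstacle is the well-definedness and sign of the factor $\tfrac1h$. Because $v_h+\tau(v_h)=v_{-h}+\tau(v_{-h})$, the line $\langle v_h+\tau(v_h)\rangle_\R$ does not by itself fix the sign of $h$, and the two choices give the two opposite complex structures $\pm\tfrac1h\rho(\sigma_3)$ on $P_h$ (the two $\SU(2)$-orientations of the fibre plane). The verification of \eqref{PCS} above is precisely what removes this ambiguity: only one choice sends $X+\sqrt{-1}\,\textrm{J}_{\sfM,\pct}(X)$ into $\ft$ rather than into $\tau(\ft)$. Once the sign is settled on each $P_h$ and on the $\CP^1$-block, $\textrm{J}_{\sfM,\pct}$ is an $\R$-linear endomorphism of $\Ta_\R\sfM_{\pct}$ with $\textrm{J}_{\sfM,\pct}^2=-\mathrm{Id}$ satisfying \eqref{PCS}, and hence, by the uniqueness in the quoted Proposition, it is the partial complex structure of the $CR$ algebra $(\gt^\tau,\ft)$ at $\pct$.
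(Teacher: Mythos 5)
Your proposal is correct and takes essentially the same approach the paper intends: the proposition is stated with no written proof beyond the preceding paragraph, which sketches exactly your argument (block decomposition of $((\ft+\tau\ft)\cap\gt^\tau)/\mathfrak{t}^\tau$ into the $\su(2)/\mathfrak{t}^\tau$ summand carrying $\pm\mathrm{ad}(\sigma_3)$ and the fibre planes spanned by $v_h+\tau(v_h)$ and $i(v_h-\tau(v_h))$ carrying the rescaled $\rho(\sigma_3)$, followed by an appeal to the uniqueness in the characterization \eqref{PCS}), and your eigenvalue computations $\rho(\sigma_3)v_h=ih\,v_h$ and the resulting check that $X+\sqrt{-1}\,\textrm{J}_{\sfM,\pct}(X)$ lands on a single weight line in $\ft$ are accurate. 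Your explicit resolution of the sign ambiguity in $h$ (since $v_h+\tau(v_h)=v_{-h}+\tau(v_{-h})$, only one of the two choices $\pm\tfrac{1}{h}\rho(\sigma_3)$ sends $X+\sqrt{-1}\,\textrm{J}_{\sfM,\pct}(X)$ into $\ft$ rather than $\tau(\ft)$) is a genuine refinement of the paper's formulation, which leaves that choice implicit.
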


\begin{remark}\label{HOLF}

We noticed that the order of $k$-nondegeneracy of \eqref{CRM} at $\pct$ can be computed using the sequence \eqref{Fremanseq} associated with the $CR$ algebra $(\mathfrak{g}^\tau, \ft)$. In particular, the standard Levi form at the base $\pct$ is given by
\begin{equation*}
{\mathcal{L}}^{1}_{\pct}: \ft/\mathfrak{t} \times \tau \ft/\mathfrak{t} \ni (Z,\bar{W}) \longrightarrow [Z,\bar{W}] \textrm{ mod } \ft \oplus \tau \ft  \in \mathfrak{g}/(\ft  \oplus \tau \ft )
\end{equation*}
and it can be expressed in the basis $\{X^{\uparrow}, v_1, \dots, v_k\}$ by the $ (k{+}1) \times (k{+}1)$ Hermitian matrix
\begin{equation*}
    \begin{pmatrix}
    0 & 1 & \dots & 0 \\
    1 & 0 & & \vdots \\
    \vdots & & \ddots & \\
    0 & \dots&  & 0
\end{pmatrix}
\end{equation*}
which has constant \emph{rank} $2$ and mixed signature. Similarly, the $k$-order Levi form can be expressed at $\pct$ as
\begin{equation}\label{Lk}
{\mathcal{L}}^{h+1}_{\pct}: \ft^h/\mathfrak{t} \times \tau \ft/\mathfrak{t} \ni (Z,\bar{W}) \longrightarrow [Z,\bar{W}] \textrm{ mod } \ft^h \oplus \tau \ft\in (\ft^{h-1} \oplus \tau \ft)/(\ft^h \oplus \tau \ft).
\end{equation}

\end{remark}
\section{A Model for the $k-$nondegenerate CR hypersurfaces}
The manifold $\sfM$ of \eqref{CRM} is real analytic, and hence its $CR$ structure is locally induced by $\mathbb{C}^N$ for some $N \geq k+2$, with $\Ta^\R \sfM = \sfT\sfM \cap \sqrt{-1} \sfT\sfM$, where $\sqrt{-1}$ is the standard complex structure on $\mathbb{C}^N$. By uniqueness, the partial complex structure \eqref{CS} is the restriction of the standard multiplication by $\sqrt{-1}$.
In a neighborhood of the point $\pct \in \sfM$, the infinitesimal pseudo-conformal transfomation $X$, corresponding to $v_0$, is \emph{transversal}, i.e., $\sqrt{-1} X_{\pct} \notin \sfT_{\pct} \sfM$. The existence of such a section for $(\sfM, \Ta^{0,1} \sfM)$ coincides with the definition of being \emph{Tanaka regular at $\pct$} (see \cite[Definition 3, p.407]{Tanaka62}). Therefore we can find a system of complex coordinates $w, z_0, z_1, \dots, z_k$ at $p$ and a real-valued function $F(z_0,\dots, z_k,\bar z_0, \dots, \bar z_k)$ such that $\sfM$ is defined by the real-valued function
\begin{equation}\label{34} \textrm{Re}(w) = F(z, \bar{z}), \end{equation}
(see \cite[Prop.4, p.408]{Tanaka62}).\par
  Now consider the hypersurface $\mathsf{M}$ in the space $\mathbb{C}^{k+2}$, with coordinates $z=(w, z_0, \dots, z_k)$, given by the real equation  
\begin{equation}\label{eq0}  
    \text{Re}(w) = 2\sum_{h=1}^{k} \text{Re} (z_0^h \bar{z}_h).  
\end{equation}  
This example was  presented in \cite[Example 1]{Labovskii1997}
where it was shown that it is homogeneous and $\mathfrak{hol}(M)$, i.e. the set of vector fields $Z{=}b(z)\frac{\partial}{\partial w}+\sum_{h=0}^ka(z)\frac{\partial}{\partial z_h},$  with holomorphic coefficients such that $\textrm{Re}(Z)$  is tangent to $\sfM$, is finite dimensional (\cite[Example 1]{Labovskii1997}). We start observing that the Levi form of \eqref{eq0}, defined by $\frac{\partial^2 F}{\partial z \partial \bar{z}},$
at zero  coincides with the one builded with the $CR$ algebra  $(\su(2)\oplus\sfV^{\tau}, \ft)$ of Lemma \eqref{computation} at \( \pct \).  A similar observation applies to the higher-order Levi forms of the hypersurface \( \sfM \) in \eqref{eq0} and the maps in Remark \ref{HOLF}.

\begin{proposition}\label{abelianpart}
Fix the hypersurface $\sfM$ given by \eqref{eq0} and consider the complex vector fields 
\begin{equation}\label{Zh}
    {Z}_h = \frac{1}{2} \frac{\partial}{\partial z_h} + z_0^h \frac{\partial}{\partial w}, \quad  
    {Z'}_h = i \left( \frac{1}{2} \frac{\partial}{\partial z_h} - z_0^h \frac{\partial}{\partial w} \right), \quad\text{for }1\leq h\leq k,
\end{equation}
 
\begin{equation}\label{W}
    {W} = i \frac{\partial}{\partial w},
    \end{equation}
 \begin{equation}\label{Ahj}
    A_{hj}=\frac{1}{2}(z^h_0\frac{\partial}{\partial z_j}-z_0^j\frac{\partial}{\partial z_h}),\quad A'_{hj}=-\frac{i}{2}(z^h_0\frac{\partial}{\partial z_j}+z_0^j\frac{\partial}{\partial z_h}), 
\quad\text{for }1\leq h<j\leq k,
\end{equation}
and
\begin{equation}\label{Ah}
A'_h=iz_0^h\frac{\partial}{\partial z_h},
\quad\text{for }1\leq h\leq k.
\end{equation}
Then the vector fields given in \eqref{Zh}, \eqref{W}, \eqref{Ahj}, and \eqref{Ah} have real parts tangent to $\sfM$ and pairwise commute. They generate a $(k+1)^2$-dimensional abelian subalgebra of $\mathfrak{hol}(\sfM)$, which we denote by $\mathcal{W}$.
\qed
\end{proposition}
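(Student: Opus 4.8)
The plan is to handle all four families of fields at once by isolating the one structural feature they share. Writing the defining function of $\sfM$ from \eqref{eq0} as
\[
\rho = \mathrm{Re}(w) - 2\sum_{h=1}^{k}\mathrm{Re}(z_0^h\bar z_h) = \tfrac12(w+\bar w) - \sum_{h=1}^{k}\bigl(z_0^h\bar z_h + \bar z_0^{\,h} z_h\bigr),
\]
I note that for a holomorphic field $V$ one has $\overline{V\rho}=\bar V\rho$ because $\rho$ is real, so $\mathrm{Re}(V)$ is tangent to $\sfM=\{\rho=0\}$ exactly when $\mathrm{Re}(V\rho)$ vanishes on $\sfM$. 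The only holomorphic derivatives of $\rho$ are $\partial_{w}\rho=\tfrac12$ and $\partial_{z_j}\rho=-\bar z_0^{\,j}$ for $1\le j\le k$, so I would simply evaluate $V\rho$ for each family. In every case the answer is purely imaginary \emph{everywhere}: for instance $Z_h\rho=i\,\mathrm{Im}(z_0^h)$, $Z'_h\rho=-i\,\mathrm{Re}(z_0^h)$, $W\rho=\tfrac{i}{2}$, $A'_h\rho=-i|z_0|^{2h}$, while $A_{hj}\rho$ and $A'_{hj}\rho$ equal $i$ times, respectively, $-\mathrm{Im}(z_0^h\bar z_0^{\,j})$ and $\mathrm{Re}(z_0^h\bar z_0^{\,j})$. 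Hence $\mathrm{Re}(V\rho)\equiv 0$ and the tangency holds on all of $\C^{k+2}$, a fortiori on $\sfM$.

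The conceptual heart of the proof is the observation that every field in \eqref{Zh}, \eqref{W}, \eqref{Ahj}, \eqref{Ah} has the form
\[
V = b(z_0)\,\partial_{w} + \sum_{j=1}^{k} c_j(z_0)\,\partial_{z_j},
\]
that is, it carries no $\partial_{z_0}$ component and all of its coefficients depend only on $z_0$. Consequently, in $[V,V']=\sum_\alpha\bigl(V(V'^{\,\alpha})-V'(V^{\alpha})\bigr)\partial_\alpha$ each term $V(V'^{\,\alpha})$ is the action of a field with no $\partial_{z_0}$ on a function of $z_0$ alone, and therefore vanishes identically; the same is true of $V'(V^{\alpha})$. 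Thus $[V,V']=0$ for every pair, and pairwise commutativity drops out with no case analysis.

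It then remains to pin down the dimension. Counting the families gives $k+k+1+\binom{k}{2}+\binom{k}{2}+k=(k+1)^2$ fields, so I would finish by checking $\R$-linear independence. Setting a real combination $\sum_h\alpha_h Z_h+\sum_h\beta_h Z'_h+\gamma W+\sum_{h<j}\delta_{hj}A_{hj}+\sum_{h<j}\epsilon_{hj}A'_{hj}+\sum_h\zeta_h A'_h$ equal to zero, the $\partial_{w}$-coefficient is $i\gamma+\sum_h(\alpha_h-i\beta_h)z_0^h$; matching powers of $z_0$ and splitting real and imaginary parts forces $\gamma=0$ and $\alpha_h=\beta_h=0$. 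For each index $l$ the $\partial_{z_l}$-coefficient is a polynomial in $z_0$ whose $z_0^l$-term is $i\zeta_l$ and whose $z_0^m$-term ($m\ne l$) is $\tfrac12(\delta_{ml}-i\epsilon_{ml})$ for $m<l$ and $-\tfrac12(\delta_{lm}+i\epsilon_{lm})$ for $m>l$; matching powers and again separating real and imaginary parts forces $\zeta_l=0$ and all $\delta_{hj}=\epsilon_{hj}=0$. Finally, since holomorphic fields obey $[\mathrm{Re}(Z_1),\mathrm{Re}(Z_2)]=\tfrac12\mathrm{Re}([Z_1,Z_2])$ and all brackets here are zero, the $\R$-span $\mathcal{W}$ is closed under bracket and is an abelian subalgebra of $\mathfrak{hol}(\sfM)$ of dimension $(k+1)^2$. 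I expect the only place requiring genuine care to be this linear-independence bookkeeping; both tangency and commutativity reduce to the single remark that these fields neither differentiate $z_0$ nor depend on any variable other than $z_0$.
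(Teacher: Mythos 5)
Your proof is correct, and it is essentially the argument the paper intends: Proposition~\ref{abelianpart} is stated with a terminal \qed, i.e.\ the authors leave it as a direct verification, and your single structural observation --- every listed field has the form $b(z_0)\,\partial_w+\sum_{j\geq 1}c_j(z_0)\,\partial_{z_j}$, with no $\partial_{z_0}$ component and coefficients depending on $z_0$ alone --- disposes of all the commutators at once, while your tangency computations (each $V\rho$ purely imaginary) and the $\R$-linear-independence bookkeeping giving $3k+1+2\binom{k}{2}=(k+1)^2$ check out. One sentence should be repaired: it is not true that the only nonvanishing holomorphic derivatives of $\rho$ are $\partial_w\rho$ and $\partial_{z_j}\rho$ for $1\leq j\leq k$, since $\partial_{z_0}\rho=-\sum_{h=1}^{k}h\,z_0^{h-1}\bar z_h\not\equiv 0$; the omission is harmless precisely because none of the fields in \eqref{Zh}, \eqref{W}, \eqref{Ahj}, \eqref{Ah} carries a $\partial_{z_0}$ term, but you should phrase it as ``the only derivatives of $\rho$ relevant to these fields'' rather than as a claim about $\rho$ itself.
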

We observe that the vector fields in \eqref{Ahj} and \eqref{Ah}  vanish at the origin. 
We now define three complex vector fields, $\mathcal{J}$, $\mathcal{E}$, and $\mathcal{K}$. The first of them, when rescaled by $h$, will relate the primed and unprimed vector fields, as well as $\mathcal{E}$ and $\mathcal{K}$ will be associated with a grading element.

\begin{proposition}\label{cpxstrVF}
Fix the hypersurface $\sfM$ given by \eqref{eq0} and consider the complex vector fields that vanishes at the origin:
\begin{equation}\label{E}
    \mathcal{E} = (k{+}1)w \frac{\partial}{\partial w}+ z_0 \frac{\partial}{\partial z_0} + \sum_{h=1}^{k} (k{+}1{-}h) z_h \frac{\partial}{\partial z_h},
\end{equation}
\begin{equation}\label{J}
    \mathcal{J} = -i \left( z_0 \frac{\partial}{\partial z_0} + \sum_{h=1}^{k} h z_h \frac{\partial}{\partial z_h} \right),
\end{equation}

\begin{equation}
    \mathcal{K}=-z_0\frac{\partial}{\partial z_0}+\sum_{h=1}^khz_h\frac{\partial}{\partial z_h}.
\end{equation}
Then we have:
\begin{enumerate}
    \item $\mathcal{E}$, $\mathcal{J} \textit{ and } \mathcal{K} \in \mathfrak{hol}(\sfM)$;
     \item $[\mathcal{J}, \mathcal{E}]=[\mathcal{K}, \mathcal{E}]=[\mathcal{J}, \mathcal{K}]=0$;
    \item Considering the vector fields $Z_h, Z'_h$, and $W$ defined in \eqref{Zh} and \eqref{W}, we obtain:
   \begin{equation*}
       [\mathcal{J}, Z_h] = h Z'_h, \quad [\mathcal{J}, Z'_h] = -h Z_h, \quad \textit{for}\, 1\leq h\leq k,
   \end{equation*}
   \begin{equation*}
       [\mathcal{J}, W] = 0.
   \end{equation*}
     \item Considering the vector fields $A_{h,j}, A'_{h,j}$, and $A'_h$ defined in \eqref{Ahj} and \eqref{Ah}, we obtain:
   \begin{equation*}
       [\mathcal{J}, A_{h,j}] = (h-j) A'_{h,j}, \quad [\mathcal{J}, A'_{h,j}] = -(h-j) A_{h,j}, \quad \textit{for}\, 1\leq h<j\leq k,
   \end{equation*}
   \begin{equation*}
       [\mathcal{J}, A'_h] = 0 \quad \textit{for}\, 1\leq h\leq k.
   \end{equation*}
   \item The element $\mathcal{K}+\mathcal{E}$ act as $-(k+1)\mathrm{Id}$ on the abelian algebra $\mathcal{W}$ of Proposition \ref{Ah}.
   
\end{enumerate}
\qed
\end{proposition}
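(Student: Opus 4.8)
The plan is to establish all five parts by direct computation, organized around a single structural observation: $\mathcal{E}$, $\mathcal{J}$, and $\mathcal{K}$ are all \emph{diagonal} (Euler-type) vector fields, i.e.\ of the form $D=\sum_{\mu}\lambda_{\mu}\zeta_{\mu}\,\partial_{\zeta_{\mu}}$ with $\zeta_{\mu}$ ranging over the coordinates $w,z_0,\dots,z_k$. For any such $D$ one has the elementary commutator rule
\[
[D,\,g\,\partial_{\zeta_m}]=\bigl(D(g)-\lambda_m\,g\bigr)\partial_{\zeta_m},
\]
which I apply term by term throughout. The eigenvalues I need are those of $\mathcal{J}$, namely $\lambda_{z_0}=-i$, $\lambda_{z_h}=-ih$ for $1\le h\le k$, and $\lambda_w=0$. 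Part $(2)$ is then immediate: any two diagonal fields in the same coordinate system commute, since $[\mathcal{J},\mathcal{E}]$, $[\mathcal{K},\mathcal{E}]$ and $[\mathcal{J},\mathcal{K}]$ annihilate every $\zeta_{\mu}$.

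For $(1)$, I write $r=\mathrm{Re}(w)-2\sum_{h=1}^{k}\mathrm{Re}(z_0^{h}\bar z_h)$, so that $\sfM=\{r=0\}$; since $r$ is real, tangency of $\mathrm{Re}(Z)$ is equivalent to $\mathrm{Re}\bigl(Z(r)\bigr)\equiv 0$ on $\sfM$. Using $\partial_w r=\tfrac12$, $\partial_{z_0}r=-\sum_h h\,z_0^{h-1}\bar z_h$ and $\partial_{z_h}r=-\bar z_0^{\,h}$, I evaluate each field on $r$. I expect $\mathcal{E}(r)=\tfrac{k+1}{2}\,r$ (so $\mathcal{E}$ is an infinitesimal dilation preserving $\sfM$), while $\mathcal{J}(r)=2i\sum_h h\,\mathrm{Re}(z_0^{h}\bar z_h)$ and $\mathcal{K}(r)=2i\sum_h h\,\mathrm{Im}(z_0^{h}\bar z_h)$ are purely imaginary. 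Hence $\mathrm{Re}(Z(r))$ vanishes on $\sfM$ in each case — for $\mathcal{E}$ because it equals $\tfrac{k+1}{2}r$, and for $\mathcal{J},\mathcal{K}$ identically — which gives $(1)$.

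Parts $(3)$ and $(4)$ are a bookkeeping exercise with the commutator rule for $D=\mathcal{J}$. For example $[\mathcal{J},\tfrac12\partial_{z_h}]=\tfrac{ih}{2}\partial_{z_h}$ and $[\mathcal{J},z_0^{h}\partial_w]=-ih\,z_0^{h}\partial_w$ combine to $[\mathcal{J},Z_h]=ih\bigl(\tfrac12\partial_{z_h}-z_0^{h}\partial_w\bigr)=hZ'_h$, once the bracketed field is recognized as $-iZ'_h$; the identities $[\mathcal{J},Z'_h]=-hZ_h$, $[\mathcal{J},W]=0$, and those of $(4)$ follow in the same way. The factor $(h-j)$ in $(4)$ arises because $A_{hj}$ mixes the coordinate directions $z_j$ and $z_h$, of $\mathcal{J}$-weight $-ij$ and $-ih$, so the rule returns the difference of weights; likewise $[\mathcal{J},A'_h]=0$ because $A'_h=iz_0^{h}\partial_{z_h}$ is $\mathcal{J}$-homogeneous of weight $0$. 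The only care required is tracking the factors of $i$ and matching each output to its primed or unprimed partner.

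Finally, for $(5)$ the key simplification is that the $z_0\,\partial_{z_0}$ terms of $\mathcal{E}$ and $\mathcal{K}$ cancel while the $z_h$-coefficients add to $k{+}1$, giving
\[
\mathcal{K}+\mathcal{E}=(k{+}1)\Bigl(w\,\partial_w+{\textstyle\sum_{h=1}^{k}}z_h\,\partial_{z_h}\Bigr)=:(k{+}1)D.
\]
This $D$ has eigenvalue $1$ on $w$ and on each $z_h$ with $h\ge 1$, and eigenvalue $0$ on $z_0$. Every generator of $\mathcal{W}$ in Proposition~\ref{abelianpart} is a combination of terms $g(z_0)\,\partial_{\zeta_m}$ with $\zeta_m\in\{w,z_1,\dots,z_k\}$ and $g$ a function of $z_0$ alone; since $D$ has no $z_0$-derivative, $D(g)=0$, and the rule yields $[D,g(z_0)\partial_{\zeta_m}]=-g\,\partial_{\zeta_m}$, i.e.\ $[D,Y]=-Y$ for each such generator $Y$. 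By linearity $\mathcal{K}+\mathcal{E}$ acts as $-(k{+}1)\mathrm{Id}$ on $\mathcal{W}$. I expect no genuine obstacle here — the computation is routine — so the only mild difficulty is the sign and factor-of-$i$ bookkeeping in $(3)$–$(4)$; the conceptual crux is the cancellation in $(5)$, where the vanishing of the $z_0$-weight of $D$ is precisely what forces the uniform eigenvalue $-1$ on every generator of $\mathcal{W}$.
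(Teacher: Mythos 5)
Your proof is correct and takes the route the paper itself intends: Proposition~\ref{cpxstrVF} is stated without a written proof precisely because it is this routine direct verification, and all your computations check out, including the diagonal-field commutator rule, the weight bookkeeping in $(3)$--$(4)$, and the key cancellation $\mathcal{K}+\mathcal{E}=(k{+}1)\bigl(w\,\partial_w+\sum_{h=1}^{k}z_h\,\partial_{z_h}\bigr)$ acting as $-(k{+}1)\mathrm{Id}$ on $\mathcal{W}$. One cosmetic slip: literally $\mathcal{E}(r)=\tfrac{k+1}{2}w-\sum_h h\,z_0^h\bar z_h-\sum_h(k{+}1{-}h)z_h\bar z_0^{\,h}$, so only $\mathrm{Re}\bigl(\mathcal{E}(r)\bigr)=\tfrac{k+1}{2}r$ holds, which is in fact exactly what your tangency criterion uses, so the argument stands.
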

Moreover:

\begin{proposition}\label{AscDesVF}
Fix the hypersurface $\sfM$ given by \eqref{eq0} and consider the complex vector fields: 
\begin{equation}
   Z_- = k\left(\frac{\partial}{\partial z_0} + 2z_1 \frac{\partial}{\partial w} - \sum_{h=1}^{k-1} (h+1) z_{h+1} \frac{\partial}{\partial z_h}\right),
\end{equation}
\begin{equation}
   Z'_- = i k\left( \frac{\partial}{\partial z_0} - 2z_1 \frac{\partial}{\partial w} + \sum_{h=1}^{k-1} (h+1) z_{h+1} \frac{\partial}{\partial z_h} \right),
\end{equation}
\begin{equation}
   Z_+ =  \frac{1}{k} z_0^2 \frac{\partial}{\partial z_0} + z_0 w \frac{\partial}{\partial w} + \left( z_1 z_0 - \frac{1}{2} w \right) \frac{\partial}{\partial z_1} + \sum_{h=1}^{k-1} \left( z_{h+1} z_0 + \left(1 - \frac{h}{k} \right) z_h \right) \frac{\partial}{\partial z_{h+1}},
\end{equation}
\begin{equation}
   Z'_+ = -\frac{i}{k} z_0^2 \frac{\partial}{\partial z_0} - iz_0 w \frac{\partial}{\partial w}+ \left( -iz_1 z_0 - \frac{i}{2} w \right) \frac{\partial}{\partial z_1}  + \sum_{h=1}^{k-1} \left( -iz_{h+1} z_0 + i\left(1 - \frac{h}{k} \right) z_h \right) \frac{\partial}{\partial z_{h+1}}.
\end{equation}
Then we have:
\begin{enumerate}
    \item $Z_-, Z'_-, Z_+, Z'_+ \in \mathfrak{hol}(\sfM)$;
    \item $[Z_-, Z'_-] = [Z_+, Z'_+] = 0$;
    \item $[\mathcal{J}, Z_-] = Z'_-$ and $[\mathcal{J}, Z'_-] = -Z_-$;
    \item $[\mathcal{J}, Z_+] = Z'_+$ and $[\mathcal{J}, Z'_+] = -Z_+$;
    \item $[Z_+,Z'_-] = \frac{1}{2} \mathcal{J}$  and $[Z'_+, Z_-] = -\frac{1}{2} \mathcal{J}$;
       \item $[\mathcal{K}+\mathcal{E},Z_+]=[\mathcal{K}+\mathcal{E},Z_-]=[\mathcal{K}+\mathcal{E},Z'_+]=[\mathcal{K}+\mathcal{E},Z'_-]=0$.
  
 \end{enumerate}
\qed
\end{proposition}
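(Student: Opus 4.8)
The plan is to verify all six assertions by direct computation with the polynomial vector fields on $\C^{k+2}$ (coordinates $w,z_0,\dots,z_k$), reducing membership in $\mathfrak{hol}(\sfM)$ to a tangency test and the bracket relations to the coordinate formula for Lie brackets of vector fields. Write $\rho = \mathrm{Re}(w) - 2\sum_{h=1}^k\mathrm{Re}(z_0^h\bar z_h)$ for the defining function of $\sfM$, so that $\sfM = \{\rho=0\}$. For a holomorphic field $Z = b\,\partial_w + \sum_{h=0}^k a_h\,\partial_{z_h}$, the real part $\mathrm{Re}(Z)$ is tangent to $\sfM$ precisely when $(Z+\bar Z)\rho = 2\,\mathrm{Re}(Z\rho)$ vanishes on $\{\rho=0\}$; differentiating once gives the single formula $Z\rho = \tfrac12 b - a_0\sum_{j=1}^k j\,z_0^{j-1}\bar z_j - \sum_{\ell=1}^k a_\ell\,\bar z_0^{\ell}$ that feeds all of item (1).

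For item (1) I would split the four fields into the linear ones, $Z_-,Z'_-$, and the quadratic ones, $Z_+,Z'_+$. For $Z_-,Z'_-$ the coefficients contain no $w$, so $Z\rho + \overline{Z\rho}$ is a polynomial in $z_0,\dots,z_k$ and their conjugates; after reindexing the telescoping sum $\sum_{h=1}^{k-1}(h+1)z_{h+1}\partial_{z_h}$ (set $j=h+1$) it cancels termwise against $-\sum_j j z_0^{j-1}\bar z_j$, and the residual $z_1$-terms cancel as well, so $(Z+\bar Z)\rho\equiv 0$ identically — these are genuine infinitesimal automorphisms. For $Z_+,Z'_+$ the coefficients $z_0 w$ and $-\tfrac12 w$ produce a contribution $(z_0+\bar z_0)\cdot\tfrac12(w+\bar w)$; rewriting $\tfrac12(w+\bar w) = \rho + \sum_h(z_0^h\bar z_h+\bar z_0^h z_h)$ isolates a multiple of $\rho$, which vanishes on $\sfM$, plus a $w$-free remainder that must annihilate the leftover quadratic terms. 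This last cancellation is the delicate point.

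Items (3), (4) and (6) I would dispatch uniformly by exploiting that $\mathcal{J} = -i\bigl(z_0\partial_{z_0} + \sum_{h=1}^k h\,z_h\partial_{z_h}\bigr)$ and $\mathcal{K}+\mathcal{E} = (k{+}1)\bigl(w\partial_w + \sum_{h=1}^k z_h\partial_{z_h}\bigr)$ are diagonal linear vector fields. For any such $D=\sum_i\mu_i z_i\partial_{z_i}$ and any monomial term $f\partial_{z_j}$ one has $[D, f\partial_{z_j}] = (Df - \mu_j f)\partial_{z_j}$, so the bracket merely rescales each term by the difference between its weight and the weight of its slot. For (6) every coefficient of $Z_\pm,Z'_\pm$ is $(\mathcal{K}+\mathcal{E})$-homogeneous of exactly the weight of its slot — here $z_0$ carries weight $0$, while $w$ and each $z_h$ with $h\ge1$ carry weight $k{+}1$ — so each bracket vanishes at once. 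For (3) and (4) the $\mathcal{J}$-weights of the coefficients produce precisely the planar rotation $Z_\pm\mapsto Z'_\pm\mapsto -Z_\pm$, which one confirms by matching coefficients term by term.

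This leaves (2) and the cross-brackets in (5), the only genuinely nonlinear computations, together with the tangency of $Z_+$ from (1); these are where I expect the main difficulty. Assertion (2) and the identities $[Z_+,Z'_-]=\tfrac12\mathcal{J}$, $[Z'_+,Z_-]=-\tfrac12\mathcal{J}$ require bracketing the quadratic fields $Z_+,Z'_+$ against the linear fields $Z_-,Z'_-$ and tracking how the index-shifted sums $\sum_{h=1}^{k-1}$ collapse; the coefficients $\tfrac1k z_0^2$, $(1-\tfrac hk)z_h$ and $z_0 w$ are tuned exactly so that the telescoping yields the bare Euler field $\tfrac12\mathcal{J}$ with no residual terms. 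As an a priori check on these calculations, $\mathcal{J}$ plays the role of the compact Cartan generator, and $Z_\pm,Z'_\pm$ realize the remaining generators of a copy of $\su(2)\cong\slt_2(\C)$ acting on $\sfM$; hence the outcome must reproduce the structure constants \eqref{commutators} (equivalently \eqref{slbaserelation}), which both predicts the right-hand sides and guards against sign or index errors in the telescoping.
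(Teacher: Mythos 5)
Your method is exactly the one the paper intends --- the proposition is stated with its proof omitted, as a routine direct verification --- and the parts of it you actually execute are sound: the tangency criterion $2\,\mathrm{Re}(Z\rho)$ for $\rho=\mathrm{Re}(w)-2\sum_{h=1}^k\mathrm{Re}(z_0^h\bar z_h)$, the identical vanishing of $2\,\mathrm{Re}(Z_-\rho)$ and $2\,\mathrm{Re}(Z'_-\rho)$ after reindexing, and the weight argument for the diagonal fields ($z_0$ of weight $0$, $w$ and $z_h$, $h\geq 1$, of weight $k{+}1$ under $\mathcal{K}+\mathcal{E}=(k{+}1)\bigl(w\partial_w+\sum_{h\geq 1}z_h\partial_{z_h}\bigr)$) do dispose of (3), (4), (6) correctly; even your ``delicate point'' closes as you predict, since $2\,\mathrm{Re}(Z_+\rho)=(z_0+\bar z_0)\rho$, which vanishes on $\sfM$, and (2) holds because $Z'_\pm$ mix only the two $\mathrm{ad}\,\mathcal{J}$-eigencomponents of $Z_\pm$, whose mutual bracket telescopes to zero. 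The genuine gap is that you defer precisely the computations you yourself flag as the crux --- items (2), (5) and the tangency of $Z_+,Z'_+$ --- replacing them by the assertion that the coefficients are ``tuned exactly so that the telescoping yields the bare Euler field $\tfrac12\mathcal{J}$ with no residual terms.'' A plan plus a promise is not a verification, and here the promise is the problem: it is false.

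Executing your own telescoping gives $[Z_+,Z'_-]=2\mathcal{J}$ and $[Z'_+,Z_-]=-2\mathcal{J}$, not $\pm\tfrac12\mathcal{J}$. Already at $k=1$, where the sums are empty, a term-by-term bracket gives
\begin{equation*}
[Z_+,Z'_-]=\bigl[\,z_0^2\partial_{z_0}+z_0w\partial_w+\bigl(z_1z_0-\tfrac12 w\bigr)\partial_{z_1}\,,\;i\partial_{z_0}-2iz_1\partial_w\,\bigr]=-2i\bigl(z_0\partial_{z_0}+z_1\partial_{z_1}\bigr)=2\mathcal{J},
\end{equation*}
and for general $k$ the index-shifted sums collapse to $ik\cdot\bigl(-\tfrac{2}{k}\bigr)\bigl(z_0\partial_{z_0}+\sum_{h=1}^k h\,z_h\partial_{z_h}\bigr)=2\mathcal{J}$. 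Worse, the a priori structure-constant check you invoke as a guard --- but never run --- detects exactly this: with $\Sigma_1=\tfrac12(Z_++Z_-)$, $\Sigma_2=\tfrac12(Z'_++Z'_-)$, $\Sigma_3=\mathcal{J}$ as in the paper's closing remark, items (2) and (5) force $[\Sigma_1,\Sigma_2]=\tfrac14\bigl([Z_+,Z'_-]-[Z'_+,Z_-]\bigr)$, which reproduces \eqref{commutators}, i.e.\ equals $\Sigma_3=\mathcal{J}$, only if $[Z_+,Z'_-]=2\mathcal{J}$; the printed constant $\tfrac12$ would yield $\tfrac14\mathcal{J}$. So the coefficient in item (5) is off by a factor of $4$ in the statement itself, and a completed proof must correct it rather than confirm it: the computations you postponed are not routine bookkeeping but the very step at which your proposal, as written, would certify an identity that direct calculation refutes.
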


\begin{lemma}\label{slcopia}
Fix the hypersurface $\sfM$ given by \eqref{eq0} and consider the complex vector fields given in Propositions \ref{cpxstrVF} and \ref{AscDesVF}. Set  
\begin{equation}
     \mathcal{H}{:=}[Z_+,{-}Z_-]{=}[Z'_+,{-}Z'_-]{=}\frac{2}{k{+}1}\left(k\mathcal{E}{-}\mathcal{K}\right){=}
    2\left(kw\frac{\partial}{\partial w}{+}z_0\frac{\partial}{\partial z_0}+\sum_{h=1}^k(k{-}h)z_h\frac{\partial}{\partial z_h}\right).
\end{equation}
The vector fields $Z_+, Z_-$, and $\mathcal{H}$ generate a Lie algebra isomorphic to $\mathfrak{sl}_2(\C)$ inside $\mathfrak{hol}(\sfM)$. Moreover, the element $\mathcal{H}$ gives rise to a  $2\Z$-gradation, that ranges from $-2k$ to $2k$, of the elements of $\mathcal{W}$ of Proposition \ref{abelianpart} as follow: 
\begin{equation*}
    [\mathcal{H},Z_h]=-2(k-h)Z_h,\,\quad
    [\mathcal{H},Z'_h]=-2(k-h)Z'_h,\quad \textit{for}\, 1\leq h,\,\leq k\quad [\mathcal{H},W]=-2kW,  
\end{equation*}
\begin{equation*}
    [\mathcal{H},A'_{h,j}]=2((h+j)-k)A'_{h,j},\quad [\mathcal{H},A_{h,j}]=2((h+j)-k)A_{h,j}, \quad \textit{for}\,  1\leq h<j\leq k,
\end{equation*}
\begin{equation*}
    [\mathcal{H},A'_h]=2(2h-k)A'_h,\quad \textit{for}\,    1\leq h\leq k.
\end{equation*}
\end{lemma}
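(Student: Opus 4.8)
The plan is to verify Lemma~\ref{slcopia} in two logically separate parts: first establishing that $Z_+, Z_-, \mathcal{H}$ close up into a copy of $\slt_2(\C)$, and then computing the adjoint action of $\mathcal{H}$ on each generator of $\mathcal{W}$. For the first part, I would begin by confirming the stated identity $\mathcal{H} = [Z_+, -Z_-] = \tfrac{2}{k+1}(k\mathcal{E} - \mathcal{K})$. The cleanest route is to compute $[Z_+, Z_-]$ directly as vector fields, since $Z_+, Z_-$ have explicit coefficients; alternatively, I would use Proposition~\ref{AscDesVF}(5), namely $[Z_+, Z'_-] = \tfrac12 \mathcal{J}$, together with the $\mathcal{J}$-rotation relations of Proposition~\ref{AscDesVF}(3)--(4) to shortcut the bracket. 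Once $\mathcal{H}$ is identified, the remaining $\slt_2$ relations $[\mathcal{H}, Z_+] = 2Z_+$ and $[\mathcal{H}, Z_-] = -2Z_-$ should follow: since $\mathcal{H}$ is a linear combination of the commuting grading elements $\mathcal{E}, \mathcal{K}$ (which commute with each other by Proposition~\ref{cpxstrVF}(2)), I would compute $[\mathcal{E}, Z_\pm]$ and $[\mathcal{K}, Z_\pm]$ separately. Here I can exploit Proposition~\ref{AscDesVF}(6), which states $[\mathcal{K}+\mathcal{E}, Z_\pm] = 0$, reducing the problem to computing a single bracket, e.g. $[\mathcal{K}, Z_\pm]$, since then $[\mathcal{E}, Z_\pm] = -[\mathcal{K}, Z_\pm]$ and hence $[k\mathcal{E} - \mathcal{K}, Z_\pm] = -(k+1)[\mathcal{K}, Z_\pm]$.

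For the gradation part, I would compute $[\mathcal{H}, \cdot]$ on each of the generators $Z_h, Z'_h, W, A_{h,j}, A'_{h,j}, A'_h$ of $\mathcal{W}$. Rather than brute-forcing every bracket, I would again pass through $\mathcal{E}$ and $\mathcal{K}$ using $\mathcal{H} = \tfrac{2}{k+1}(k\mathcal{E} - \mathcal{K})$. The key simplification is that both $\mathcal{E}$ and $\mathcal{K}$ are \emph{diagonal} vector fields — each is of the form $\sum_j c_j z_j \partial_{z_j}$ (plus the $w\partial_w$ term for $\mathcal{E}$) — so their brackets with any coordinate vector field act by a scalar weight read off from the degrees. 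Concretely, for a monomial vector field $P(z)\,\partial_{z_\ell}$, the bracket with a diagonal Euler-type field returns $(\mathrm{weight\ of\ }P - c_\ell)\,P(z)\,\partial_{z_\ell}$, which turns the whole computation into bookkeeping of exponents. Matching these weights against the claimed coefficients $-2(k-h)$, $-2k$, $2((h+j)-k)$, and $2(2h-k)$ is then a finite, routine check generator by generator.

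The main obstacle I anticipate is the identification of $\mathcal{H}$ itself, specifically verifying that the two bracket expressions $[Z_+, -Z_-]$ and $[Z'_+, -Z'_-]$ genuinely coincide and equal the stated Euler field, because $Z_+$ has the most complicated coefficients in the whole construction (it carries the quadratic term $\tfrac{1}{k}z_0^2\partial_{z_0}$ and the mixed $z_0 w\,\partial_w$ term). The bracket $[Z_+, Z_-]$ produces cancellations across the $\partial_w$, $\partial_{z_0}$, and $\partial_{z_h}$ components that must telescope correctly; getting the coefficient $(1 - h/k)$ in $Z_+$ to interact properly with the factor $(h+1)$ in $Z_-$ so that the result is exactly the diagonal field with entries $2(k-h)$ is the delicate point. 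To manage this I would organize the computation component-by-component in the coordinates $w, z_0, z_1, \dots, z_k$, and I would use the second expression $[Z'_+, -Z'_-]$ as an independent cross-check: since $Z'_\pm = \pm i \cdot(\text{suitable sign-twisted version of } Z_\pm)$, equality of the two brackets is a consistency condition that, once confirmed, gives confidence in the algebra. Finally, the consistency of the scalar $\tfrac{2}{k+1}(k\mathcal{E}-\mathcal{K})$ with Proposition~\ref{cpxstrVF}(5) (where $\mathcal{K}+\mathcal{E}$ acts as $-(k+1)\mathrm{Id}$ on $\mathcal{W}$) provides a further sanity check on the overall normalization.
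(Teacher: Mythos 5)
Your proposal is correct and follows essentially the same route as the paper, whose proof is simply a direct verification that $Z_+$, $-Z_-$, $\mathcal{H}$ satisfy the $\mathfrak{sl}_2$ relations \eqref{slbaserelation} (with the gradation on $\mathcal{W}$ left as the analogous coefficient check). Your organization of the computation through the commuting Euler-type fields $\mathcal{E}$, $\mathcal{K}$ and the identity $[k\mathcal{E}-\mathcal{K},Z_\pm]=-(k{+}1)[\mathcal{K},Z_\pm]$ from Proposition \ref{AscDesVF}(6) is a sound bookkeeping refinement of that same direct check, and your weight-reading rule does reproduce the stated eigenvalues $-2(k-h)$, $-2k$, $2((h{+}j)-k)$, $2(2h-k)$.
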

\begin{proof}
The vector fields $Z_+, -Z_-$, and $\mathcal{H}$ satisfy the relations in \eqref{slbaserelation}, and the isomorphism $ Z_+\mapsto X^{\uparrow}$, $ -Z_-\mapsto X^{\downarrow}$ and $ \mathcal{H}\mapsto H$, with the standard bases of $\mathfrak{sl}_2(\C)$, is well defined.
\end{proof}
We notice that by Proposition \ref{cpxstrVF} the element $\pm\frac{2}{k+1}(\mathcal{E}+\mathcal{K})$ can be used to move up or down the $2\Z$-gradation of $\mathcal{W}$. 

\begin{lemma}\label{sucopia}
Fix the hypersurface $\sfM$ given by \eqref{eq0} and consider the complex vector fields given in Propositions \ref{cpxstrVF} and \ref{AscDesVF}. Set  
\begin{equation}
    \tilde\Sigma_1 = \frac{1
    %\sqrt k
    }{2} \left(\mathpzc{Re}(Z_+)-\mathpzc{Re}(-Z_-)\right), \quad  
    \tilde\Sigma_2 = \frac{i}{2} \left(\mathpzc{Re}(Z_+)+\mathpzc{Re}(-Z_-)\right), \quad  
   \tilde\Sigma_3 = [\Sigma_1,\Sigma_2].
\end{equation}
Then the vector fields $\Sigma_1, \Sigma_2$, and $\Sigma_3$ generate a Lie algebra isomorphic to $\mathfrak{su}(2)$  tangent to $\sfM$. Moreover we have
\begin{equation}
    \tilde\Sigma_3=\frac{i}{2}\mathpzc{Re}(\mathcal{H}).
\end{equation}
\end{lemma}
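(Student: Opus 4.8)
The plan is to realize the compact real form $\su(2)\subset\slt_{2}(\C)$ inside $\mathfrak{hol}(\sfM)$ by transporting the Pauli basis of \eqref{Pauli base} through the isomorphism produced in Lemma~\ref{slcopia}. Recall from that lemma that $Z_{+}\mapsto X^{\uparrow}$, $-Z_{-}\mapsto X^{\downarrow}$ and $\mathcal{H}\mapsto H$ defines a Lie algebra isomorphism onto a copy of $\slt_{2}(\C)$ sitting inside $\mathfrak{hol}(\sfM)$. Since $\sigmaup_{1}=\tfrac12(X^{\uparrow}-X^{\downarrow})$, $\sigmaup_{2}=\tfrac{i}{2}(X^{\uparrow}+X^{\downarrow})$ and $\sigmaup_{3}=\tfrac{i}{2}H$ span the compact form and satisfy \eqref{commutators}, the images $\tfrac12(Z_{+}+Z_{-})$, $\tfrac{i}{2}(Z_{+}-Z_{-})$ and $\tfrac{i}{2}\mathcal{H}$ span a copy of $\su(2)$ among holomorphic vector fields. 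The fields $\tilde\Sigma_{1},\tilde\Sigma_{2}$ in the statement are precisely the real parts of the first two of these, and $\tilde\Sigma_{3}$ is then defined as their bracket.

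The key computational tool will be the bracket-of-real-parts identity: for holomorphic vector fields $Z,W$, a $(1,0)$-field commutes with a $(0,1)$-field, so $[Z,\overline{W}]=[\overline{Z},W]=0$, whence $[\mathpzc{Re}(Z),\mathpzc{Re}(W)]=\tfrac12\mathpzc{Re}([Z,W])$. This reduces every bracket among the $\tilde\Sigma_{j}$ to a bracket inside the $\slt_{2}(\C)$ copy of Lemma~\ref{slcopia}. For tangency, I would invoke Proposition~\ref{AscDesVF}(1): since $Z_{+},Z_{-}\in\mathfrak{hol}(\sfM)$, their real parts are tangent to $\sfM$, so $\tilde\Sigma_{1},\tilde\Sigma_{2}$, being real combinations of these, are tangent, and hence so is $\tilde\Sigma_{3}=[\tilde\Sigma_{1},\tilde\Sigma_{2}]$ because tangent fields close under bracket.

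To carry it out, I would apply the identity together with $\mathcal{H}=[Z_{+},-Z_{-}]$ (Lemma~\ref{slcopia}): the bracket $\tilde\Sigma_{3}=[\tilde\Sigma_{1},\tilde\Sigma_{2}]$ collapses to a real multiple of $\mathpzc{Re}(i\mathcal{H})$, giving the asserted value $\tfrac{i}{2}\mathpzc{Re}(\mathcal{H})$ once the scalars are normalized. The two remaining relations $[\tilde\Sigma_{3},\tilde\Sigma_{1}]$ and $[\tilde\Sigma_{3},\tilde\Sigma_{2}]$ are handled identically, using $[\sigmaup_{3},\sigmaup_{1}]=\sigmaup_{2}$ and $[\sigmaup_{3},\sigmaup_{2}]=-\sigmaup_{1}$ from \eqref{commutators}; they reproduce the same sign pattern, so the three tangent fields span a $3$-dimensional real subalgebra whose mixed-sign structure constants force it to be $\su(2)$ rather than $\slt_{2}(\R)$.

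The conceptual step is immediate from Lemma~\ref{slcopia}; the delicate part is pure bookkeeping. The factor $\tfrac12$ in the real-part identity, and the fact that $\mathpzc{Re}$ does not commute with multiplication by $i$ (so that $\mathpzc{Re}(i\mathcal{H})\neq i\,\mathpzc{Re}(\mathcal{H})$), mean one must place the scalars so that each $\tilde\Sigma_{j}$ is a genuine real tangent field and so that the structure constants match \eqref{commutators} exactly rather than up to a global rescaling. Pinning down this normalization is what fixes the constant $\tfrac{i}{2}$ in $\tilde\Sigma_{3}=\tfrac{i}{2}\mathpzc{Re}(\mathcal{H})$, and I expect this constant-tracking, not any structural difficulty, to be the only real obstacle.
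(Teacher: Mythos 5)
Your bracket identity $[\mathpzc{Re}(Z),\mathpzc{Re}(W)]=\tfrac12\mathpzc{Re}([Z,W])$ for fields with holomorphic coefficients is correct, and transporting the Pauli basis through the isomorphism of Lemma~\ref{slcopia} is the right general idea. But there is a genuine gap at the tangency step, and it is not, as you predict, mere constant-tracking. The fields you assign to $\sigmaup_2$ and $\sigmaup_3$, namely $\tfrac{i}{2}(Z_+-Z_-)$ and $\tfrac{i}{2}\mathcal{H}$, do span together with $\tfrac12(Z_++Z_-)$ a copy of $\su(2)$ of holomorphic vector fields, but these fields do \emph{not} lie in $\mathfrak{hol}(\sfM)$: $\mathfrak{hol}(\sfM)$ is only a \emph{real} Lie algebra, so $Z\in\mathfrak{hol}(\sfM)$ does not give $iZ\in\mathfrak{hol}(\sfM)$, and $\mathpzc{Re}(iZ)=-\mathrm{Im}(Z)$ is not a real linear combination of $\mathpzc{Re}(Z_+)$ and $\mathpzc{Re}(Z_-)$ — so your sentence ``$\tilde\Sigma_2$, being a real combination of these, is tangent'' does not cover your own (correct) reading of $\tilde\Sigma_2$ as $\mathpzc{Re}\bigl(\tfrac{i}{2}(Z_+-Z_-)\bigr)$. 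Concretely: with $r=\mathrm{Re}(w)-\sum_{h=1}^k(z_0^h\bar z_h+\bar z_0^h z_h)$ one computes $Z_-(r)=2ik\bigl(\mathrm{Im}(z_1)+\sum_{h\geq 2}h\,\mathrm{Im}(z_h\bar z_0^{h-1})\bigr)$, which is purely imaginary, so $\mathpzc{Re}(Z_-)$ is tangent but $\mathpzc{Re}(iZ_-)$ is not; at the point $w=0$, $z_0=0$, $z_1=i\epsilon$ of $\sfM$ one gets $Z_+(r)=0$ and hence $\mathpzc{Re}\bigl(\tfrac{i}{2}(Z_+-Z_-)\bigr)(r)=k\epsilon\neq 0$. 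So your $\tilde\Sigma_2$ fails to be tangent to $\sfM$, and the closure-under-bracket argument for $\tilde\Sigma_3$ collapses with it.

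The paper's (one-line) proof avoids exactly this trap, and this is what Proposition~\ref{AscDesVF} exists for: the role of ``$i$ times $Z_\pm$'' is played by the separately constructed fields $Z'_\pm$, which belong to $\mathfrak{hol}(\sfM)$ by Proposition~\ref{AscDesVF}(1) but are \emph{not} equal to $\pm iZ_\pm$ (compare the coefficients: $Z'_+$ flips the signs of the $w$- and $z_h$-terms relative to $-iZ_+$). The Pauli relations \eqref{commutators} are then verified directly from the listed brackets $[Z_+,Z'_+]=[Z_-,Z'_-]=0$, $[Z_+,Z'_-]=\tfrac12\mathcal{J}$, $[\mathcal{J},Z_\pm]=Z'_\pm$, and $\mathcal{H}=[Z'_+,-Z'_-]$ from Lemma~\ref{slcopia}, with your real-part identity doing the bookkeeping. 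You are right that the lemma's displayed formulas, with $i$ standing outside $\mathpzc{Re}$, are typographically off ($\mathpzc{Re}$ does not commute with $i$, as you note), but the correct repair is the substitution of $Z'_\pm$ for $iZ_\pm$, not a renormalization of scalars: no choice of constants makes the real parts of your naive Pauli transport tangent to $\sfM$.
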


\begin{proof}
By the relations in Proposition \ref{AscDesVF}, we conclude that $\Sigma_1, \Sigma_2$, and $\Sigma_3$ satisfy the relations in \eqref{commutators}, and the isomorphism $\Sigma_i \mapsto \sigma_i$, for $i \in \{1,2,3\}$, with the Pauli basis of $\mathfrak{su}(2)$, is well defined.
\end{proof}
We note that the element $\Sigma_3$ acts on the real parts of the elements described in Proposition~\ref{abelianpart}, as specified in Proposition~\ref{partialcomplexstructure}.
\begin{remark}
    We observe that the elements
    \begin{equation}
        \Sigma_1 = \frac{1}{2}(Z_+ + Z_-), \quad  
       \Sigma_2 = \frac{1}{2}(Z'_+ + Z'_-), \quad  
        \Sigma_3 = \mathcal{J}.
    \end{equation}
   generates a copy of the Lie algebra $\su(2)$  into $\mathfrak{hol}(\sfM)$.
    \end{remark}
 Combining all the results above, we obtain:

\begin{theorem}\label{loceq}
  Given a fixed natural number \( k \geq 1 \), the homogeneous $k$-nondegenerate CR hypersurface defined in Theorem \ref{thAMost}, with the CR algebra given in Lemma \ref{computation}, is locally described by the hypersurface $\sfM$ with equation:
  \begin{equation*}\label{eq}
        \textrm{Re}(w) = 2\sum^k_{h=1} \textrm{Re} (z_0^h \bar{z}_h),
\end{equation*}
where \( (w, z_0, \dots, z_k) \) are local coordinates in \( \mathbb{C}^{k+2} \).
\end{theorem}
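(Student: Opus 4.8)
The plan is to deduce the local equivalence from an identification of CR algebras, using that both manifolds are real-analytic and homogeneous. Concretely, the manifold of Theorem~\ref{thAMost} has CR algebra $(\gt^\tau,\ft)$ with $\gt^\tau=\su(2)\oplus\sfV^\tau$ and $\ft=\mathfrak{b}\oplus\sfV^+$, while the hypersurface $\sfM$ of \eqref{eq0} is real-analytic and, as recalled after \eqref{34}, Tanaka regular at the origin and homogeneous with $\mathfrak{hol}(\sfM)$ finite-dimensional by \cite{Labovskii1997}. Since the germ at a point of a real-analytic homogeneous CR manifold is determined up to CR-diffeomorphism by the CR algebra attached to a transitive subalgebra of its infinitesimal automorphisms (the correspondence of Section~\ref{CRalgebra} and \cite{MN97}), it suffices to produce inside $\mathfrak{hol}(\sfM)$ a subalgebra isomorphic to $\gt^\tau$ that acts transitively near $0$, with one-dimensional isotropy $\mathfrak{t}^\tau$, and whose induced $(0,1)$-structure complexifies to $\ft$.

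The construction of this subalgebra is exactly what the preceding results provide, and I would assemble it in three layers. For the reductive part I would take the copy of $\su(2)$ spanned by $\Sigma_1=\tfrac12(Z_++Z_-)$, $\Sigma_2=\tfrac12(Z'_++Z'_-)$ and $\Sigma_3=\mathcal{J}$ (Lemma~\ref{sucopia} and the following Remark), so that its Cartan generator is the partial-complex-structure field $\mathcal{J}$ of Proposition~\ref{cpxstrVF}; this is the crucial choice, because by Proposition~\ref{partialcomplexstructure} the isotropy generator must act on the analytic tangent space as the complex structure $\mathrm{J}_{\sfM}$, which under the identifications is $\mathrm{ad}(\mathcal{J})$ rescaled. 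For the radical I would take the $(2k+1)$-dimensional span $\sfV^\tau=\langle W,Z_1,Z'_1,\dots,Z_k,Z'_k\rangle$ of the fields of Proposition~\ref{abelianpart} that do not vanish at $0$; these commute (Proposition~\ref{abelianpart}), and the relations $[\mathcal{J},Z_h]=hZ'_h$, $[\mathcal{J},Z'_h]=-hZ_h$, $[\mathcal{J},W]=0$ of Proposition~\ref{cpxstrVF}, together with the brackets $[Z_\pm,W]$ and $[Z_\pm,Z_h]$ read off from the explicit fields of Propositions~\ref{abelianpart} and \ref{AscDesVF}, show that the chosen $\su(2)$ normalizes $\sfV^\tau$ and acts on it as the irreducible real representation of dimension $2k+1$. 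Thus $\su(2)\ltimes\sfV^\tau$ realizes \eqref{Liebracket} and is isomorphic to $\gt^\tau$.

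It then remains to match the CR data. The isotropy is $\langle\mathcal{J}\rangle_\R$, which vanishes at $0$ and corresponds to $\mathfrak{t}^\tau=\langle iH\rangle_\R=\ft\cap\gt^\tau$ of Lemma~\ref{computation}(4); the dimension count $3+(2k+1)-1=2k+3=\dim_\R\sfM$ gives transitivity. Diagonalizing $\mathrm{ad}(\mathcal{J})$ identifies the weight spaces $\sfV_h=\langle Z_h-iZ'_h\rangle$ for $h>0$, $\sfV_0=\langle W\rangle$ and $\sfV_{-h}=\langle Z_h+iZ'_h\rangle$, matching \eqref{GradV}, so that $\ft=\mathfrak{b}\oplus\sfV^+$ corresponds to the span of $\mathcal{J}$, the positive root vector $\Sigma_1-i\Sigma_2$ of this $\su(2)$, and the $Z_h-iZ'_h$ with $h>0$; this is precisely the complexified $(0,1)$-tangent space, and its Levi form and higher iterates agree with those of Remark~\ref{HOLF}, as already observed after \eqref{34}. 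Feeding the resulting CR algebra $(\gt^\tau,\ft)$ into the real-analytic homogeneous correspondence yields the desired local CR-diffeomorphism with the model of Theorem~\ref{thAMost}, and the $k$-nondegeneracy is inherited from Lemma~\ref{computation}(5).

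The main obstacle, and the step I would treat most carefully, is the simultaneous choice of the correct $\su(2)$ and radical. The grading element $\mathcal{H}$ of Lemma~\ref{slcopia} does \emph{not} preserve the span of the non-vanishing fields, since its weights on $W,Z_h,Z'_h$ are asymmetric, running only from $-2k$ to $0$; hence the copy of $\su(2)$ with Cartan $\mathcal{H}$ fails to provide an invariant complement. Only the copy with Cartan $\mathcal{J}$ makes $\langle W,Z_h,Z'_h\rangle$ into an irreducible module with symmetric weights $-2k,\dots,2k$, while simultaneously making the isotropy act as the CR complex structure. Verifying this closure, in particular that raising the top weight vector $Z_k-iZ'_k$ and lowering the bottom weight vector $Z_k+iZ'_k$ both vanish, is the technical heart on which the whole identification rests.
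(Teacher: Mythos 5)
Your global strategy coincides with the paper's: realize the abstract algebra $\gt^\tau=\su(2)\oplus\sfV^\tau$ of Lemma~\ref{computation} as a transitive subalgebra of $\mathfrak{hol}(\sfM)$ built from the vector fields of Propositions~\ref{abelianpart}--\ref{AscDesVF}, match the isotropy and the $(0,1)$-structure, and conclude by homogeneity and real-analyticity. But the middle layer of your construction contains a genuine gap: the subspace $\sfV^\tau=\langle W,Z_1,Z'_1,\dots,Z_k,Z'_k\rangle$ is \emph{not} normalized by your chosen copy of $\su(2)$ spanned by $\Sigma_1=\tfrac12(Z_++Z_-)$, $\Sigma_2=\tfrac12(Z'_++Z'_-)$, $\Sigma_3=\mathcal{J}$, so the ``three layers'' do not close into a Lie subalgebra. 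A direct computation from the explicit fields gives, for $2\le h\le k$ (with the convention $Z_{k+1}=0$),
\begin{equation*}
[Z_+,Z_h]=\Bigl(\tfrac{h}{k}-1\Bigr)Z_{h+1}-A_{1h},\qquad A_{1h}=\tfrac12\bigl(z_0\partial_{z_h}-z_0^h\partial_{z_1}\bigr),
\end{equation*}
and similarly $[Z_+,Z'_1]=-\bigl(1-\tfrac1k\bigr)Z'_2-A'_1$; hence $[\Sigma_1,Z_h]$ contains the term $-\tfrac12A_{1h}$, which vanishes at the origin and lies outside your proposed radical. (Only for $k=1$ do these obstruction terms vanish.) Consequently the claim that ``the chosen $\su(2)$ normalizes $\sfV^\tau$ and acts on it as the irreducible real representation of dimension $2k+1$'' is false for $k\ge 2$, and the isomorphism with \eqref{Liebracket} does not exist for that choice of complement.

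The paper's proof circumvents exactly this point: the radical cannot be taken inside the translation-type fields alone, but must be the $\slt_2(\C)$-module $\mathcal{V}$ generated from $W$ by iterated $\operatorname{ad}(Z_+)$ (equivalently from $A'_k$ by $\operatorname{ad}(Z_-)$), which necessarily mixes in the origin-vanishing fields $A_{hj},A'_{hj},A'_h$ of $\mathcal{W}$; irreducibility follows because each $\operatorname{ad}^j(Z_+)(W)$ spans a multiplicity-one $\mathcal{H}$-eigenspace, and $\tau$-symmetrization then produces the real $(2k+1)$-dimensional module $\mathcal{V}^\tau$, with $\ft$ realized by $\mathcal{H}$, $Z_-$ and $\operatorname{ad}^j(Z_-)(A'_k)$, $0\le j\le k-1$. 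This also corrects your closing diagnosis: the $\mathcal{H}$-weights are asymmetric only on your (wrong) complement $\langle W,Z_h,Z'_h\rangle$; on the correct module $\mathcal{V}$ they run symmetrically from $-2k$ (on $W$) to $+2k$ (on $A'_k$), so the $\slt_2$-copy with Cartan generator $\mathcal{H}$ of Lemma~\ref{slcopia} is precisely the right one, while the copy with Cartan $\mathcal{J}$ plays only the role you correctly identify via Proposition~\ref{partialcomplexstructure}, namely implementing the partial complex structure. To repair your argument you would have to replace your $\sfV^\tau$ by (the $\tau$-symmetrization of) $\langle\operatorname{ad}^j(Z_+)(W)\rangle_{0\le j\le 2k}$ and verify the closure relations $[Z_-,W]=[Z'_-,W]=0$ and $[Z_+,A'_k]=[Z'_+,A'_k]=0$, which is exactly the paper's route.
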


\begin{proof}
We will construct a set of vector fields tangent to $\sfM$  that generates a Lie algebra isomorphic to \( \mathfrak{g}^\tau \) given in  Lemma \ref{computation}. Moreover we will define a CR structure compatible with the  one in Theorem \ref{thAMost}. By Lemma \ref{slcopia}  the vector fields $Z_+,Z_-$ and $\mathcal{H}$ generates a Lie algebra isomorphic to $\slt_2(\C)$ inside $\mathfrak{hol}(\sfM)$.
By Lemma \ref{sucopia}  the vector fields $\tilde\Sigma_1,\tilde\Sigma_2$ and $\tilde\Sigma_3$ generates a Lie algebra isomorphic to $\su(2)$ tangent $\sfM$. We have that \([Z_-, W] = [Z'_-, W] = 0\) and \([Z_+, A'_k] = [Z'_+, A'_k] = 0\). The space \(\mathcal{V}\), generated by \(\operatorname{ad}^j(Z_+)(W)\) for \(j \in \{0, \dots, 2k\}\), is a complex  \(2k+1\)-dimensional $\slt_2(\C)$-representation. Each \(\operatorname{ad}^j(Z_+)(W)\) spans a non-zero complex one-dimensional eigenspace of \(\mathcal{H}\) contained in the subspace generated by elements of grade $h=j-k$, with  \(-k \leq h \leq k\), consistent with the structure given in Lemma~\ref{slcopia}.
By the grading structure described in Lemma~\ref{slcopia}, the representation under consideration is finite-dimensional, $2\Z-$graded, and has the property that each eigenvalue occurs with multiplicity one and therefore it is irreducible \cite[Corollary 4.2.4]{Var84}. Moreover, there is a one-to-one correspondence between finite-dimensional, irreducible complex representations of $\mathfrak{su}(2)$ and those of $\mathfrak{sl}(2, \mathbb{C})$. In particular, the irreducible representations of $\mathfrak{su}(2)$ can be viewed as real forms of the irreducible representations of $\mathfrak{sl}(2, \mathbb{C})$. It follows that the complex representation $\mathcal{V}$ contains a real, irreducible representation of $\mathfrak{su}(2)$ of dimension $2k+1$, which we denote by $\mathcal{V}^\tau$. 
    The conjugation operator \(\tau\) maps the  vector space generated by $\operatorname{ad}^j(Z_-)(A'_k)$  to the conjugate generated by $\operatorname{ad}^j(Z_+)(W)$ , and through $\tau-$symmetrization, yields the corresponding real irreducible representation $\mathcal{V}^\tau$ of $\mathfrak{su}(2)$, which corresponds to the standard representation of $\mathfrak{sl}_2(\mathbb{C})$. As a consequence, the vector fields $\mathcal{H}$, $Z_-$, and the set $\{\operatorname{ad}^j(Z_-)(A'_k)\}$, with $0{\leq} j{\leq} k-1$, generate a complex Lie subalgebra isomorphic to the algebra $\ft$ described in Lemma~\ref{computation}. The $CR$ distribution and structure can be rebuild from this data. This completes the proof.
\end{proof}

\bibliographystyle{amsplain}
\renewcommand{\MR}[1]{}
\providecommand{\bysame}{\leavevmode\hbox to3em{\hrulefill}\thinspace}
\providecommand{\MR}{\relax\ifhmode\unskip\space\fi MR }

\end{document}